\newtheorem{theo}{Theorem}
\newtheorem{cor}{Corollary}
\newtheorem{lem}{Lemma}
\theoremstyle{definition}
\newtheorem{rem}{Remark}
\def \Z{\mathbb Z}
\def \Q{\mathbb Q}
\def \calO{\mathcal O}
\def \calA{\mathcal A}
\def \calD{\mathcal D}
\author{Miho Aoki }
\address{Department of Mathematics,
Interdisciplinary Faculty of Science and Engineering,
Shimane University,
Matsue, Shimane, 690-8504, Japan}
\email{aoki@riko.shimane-u.ac.jp}
\subjclass[2020]{Primary 11R04, 11R16,  Secondary 11C08, 11R80. }
\keywords{Gaussian period,  period polynomial, Shanks' cubic polynomial}
\thanks{This work was supported by JSPS KAKENHI Grant Number JP21K03181} 
\title[Cyclic cubic fields]{Gaussian periods and Shanks' cubic polynomials. II}
\begin{document}

\begin{abstract} 
We give a linear relation between a cubic  Gaussian period and  a root of Shanks' cubic polynomial 
in  wildly ramified cases.
\end{abstract}
\maketitle
\section{Introduction}\label{sec:intro}
Let $L$ be a cyclic cubic field. The conductor $\mathfrak f$ of $L$ should be 
\begin{equation}\label{eq:cond}
\mathfrak f=\begin{cases}
p_1\cdots p_{\nu} & \text{if $3\nmid \mathfrak f $\, (tamely ramified)}, \\
3^2 p_1\cdots p_{\nu} &    \text{if $3 | \mathfrak f$\, (wildly ramified)},
\end{cases}
\end{equation}
where $p_1,\ldots, p_{\nu}$ are different prime numbers satisfying $p_1 \equiv \cdots \equiv p_{\nu}\equiv 1
\pmod{3}$ (\cite[p.\,10]{H}). 
For a positive integer $n$, let $\zeta_n=e^{2\pi i/n}$ be the $n$-th root of unity. We define {\it  the Gaussian periods} $\eta_i\, (i=0,1,2)$  of  $L$ by
\begin{equation}\label{eq:period}
\eta_0={\rm Tr}_{\Q(\zeta_{\mathfrak f})/L} (\zeta_{\mathfrak f}),\quad \eta_1=\sigma (\eta_0), \quad \eta_2=\sigma^2 (\eta_0)
\end{equation}
where ${\rm Tr}_{\Q (\zeta_{\mathfrak f})/L}$ is the trace map from $\Q (\zeta_{\mathfrak f})$ to $L$, and $\sigma$ is a 
generator of ${\rm Gal}(L/\Q)$. It is known that $L=\Q (\eta_i)$ holds for any $i=0,1,2$.
We define {\it the period polynomial} $P(X)\, (\in \Z [X])$ by
\begin{equation}\label{eq:P-poly}
P(X)=(X-\eta_0)(X-\eta_1)(X-\eta_2).
\end{equation}
An explicit formula of $P(X)$ is  known as follows (\cite{G}, \cite{H}, \cite[p.\,90]{Gr}, \cite[p.\,8--9]{M}).
\begin{equation}\label{eq:P}
P(X)=\begin{cases}
X^3-\mu (\mathfrak f)X^2 +\dfrac{1-\frak f}{3} X - \mu (\mathfrak f) \dfrac{ (M-3)\mathfrak f +1}{27}& \text{if $3\nmid \mathfrak f $}, \\
& \\
X^3-\dfrac{\frak f}{3} X - \mu (\mathfrak f/9) \dfrac{ \mathfrak f M}{27}& \text{if $3| \mathfrak f $}, \\
\end{cases}
\end{equation}
where $\mu$ is the M\"{o}bius function, and
$M \, (\in \Z)$ satisfies the following for some $N\, (\in \Z)$.
\begin{equation}\label{eq:4f}
4 \mathfrak f=M^2+27N^2, 
\end{equation}
\[ \text{where}\ 
\begin{cases}
M \equiv 2\pmod{3},\ N>0& \text{if $3\nmid \mathfrak f $}, \\
M=3M_0,\ M_0 \equiv 2\pmod{3},\ N\not\equiv 0 \pmod{3},\ N>0 & \text{if $3| \mathfrak f $}. \\
\end{cases}
\]
On the other hand, for an  integer $\mathfrak f$ given in the form (\ref{eq:cond}), 
there are exactly $2^{\nu-1}$ (resp. $2^{\nu}$) pairs $(M,N)\in \Z \times \Z$   which satisfy (\ref{eq:4f}) 
(\cite[p.\,342-343,\, p.\,364\, Exercise~18]{Co}), and each pair $(M,N)$ corresponds to exactly $2^{\nu-1}$ (resp. $2^{\nu}$)  cyclic cubic fields $\Q(\eta_0)$ with conductor $\mathfrak f$ in the case of tamely (resp. wildly) ramified.

Next, we will explain the known results on the connection between the period polynomial and Shanks' cubic polynomial. 
For $n\in \Q$, we define {\it Shanks' cubic polynomial} $f_n(X)\, (\in \Q[X])$ by
\begin{equation}\label{eq:fn}
f_n(X)=X^3-nX^2-(n+3)X-1.
\end{equation}
The discriminant of $f_n(X)$ is $d(f_n)=(n^2+3n+9)^2$.
For a root of $\rho_n$ of $f_n(X)$, let $L_n=\Q(\rho_n)$ and $G={\rm Gal}(L_n/\Q)$. If $f_n(X)$ is irreducible over $\Q$, then $L_n$ is a cyclic cubic field.
In this case, 
we put $\rho_n'=\sigma (\rho_n)$ and $\rho_n''=\sigma^2 (\rho_n)$ where $\sigma$ is a generator of $G$.
It is known that $f_n(X) $ is a generic cyclic cubic polynomial (\cite[chap.\,1]{Se}). 
Namely, for any cyclic cubic field $L$, there exists $n\in \Q$ such that $L=L_n$.
If $n\in \Z$, then $f_n(X)$ is always irreducible, and the field $L_n$ is known as
{\it the simplest cubic field} (see \cite{S}, \cite{W}).

As explained below, in special cases, the relation between Shanks' polynomial $f_n(X)$ and the period polynomial $P(X)$ is known.
Let $\mathfrak f =p_1\cdots p_{\nu}$ be an integer where $p_1,\ldots,p_{\nu}$ are different prime numbers which satisfy
$p_1\equiv \cdots \equiv p_{\nu} \equiv 1\pmod{3}$, and a pair $(M,N) \in \Z \times \Z$ satisfies (\ref{eq:4f}).
Assume that $N=1$. In this case, it is known that  $L=\Q (\eta_0)$ is 
a simplest cubic field $L_n$ for  $n=\left(M-3\right)/2 \in \Z$, and 
there is a linear relation between the Gaussian period $\eta_i$ and  a root of $f_n(X)$
(\cite[p.\,536]{Le}, \cite{Ch}, \cite[Proposition~2.2]{La}).
 We can easily check $4\mathfrak f=M^2+27=4(n^2+3n+9)$,
and hence $\mathfrak f=n^2+3n+9$. If we use the explicit formula (\ref{eq:P}) of the period polynomial, we can check
\begin{equation}\label{eq:f-P-N=1}
\mu (\mathfrak f)  f_n (X) =P\left(\mu (\mathfrak f) (X+v_n )\right)
\end{equation}
where $v_n=\left( 1-n\right)/3 \, (\in \Z)$, and hence we obtain
\begin{equation}\label{eq:eta-rho-N=1}
\{ \eta_0,\eta_1,\eta_2 \} =\left\{ \mu (\mathfrak f)  (\rho_n +v_n ), \mu (\mathfrak f)  (\rho_n'+v_n), 
\mu(\mathfrak f) (\rho_n''+v_n) \right\}
\end{equation}
and $L=L_n$.

In this paper, we will extend these results (\ref{eq:f-P-N=1}) and (\ref{eq:eta-rho-N=1}) for general pairs $(M,N)$
(not necessarily $N=1$) of (\ref{eq:4f}) without the explicit formula (\ref{eq:P}) of $P(X)$ in the case of wildly ramified.
In the case of tamely ramified, the author gave the following theorem.
\begin{theo}[\cite{A2}]\label{theo:pre-main}
Let $\mathfrak f =p_1\cdots p_{\nu}$ be an integer where $p_1,\ldots,p_{\nu}$ are different prime numbers which satisfy
$p_1\equiv \cdots \equiv p_{\nu} \equiv 1\pmod{3}$, and a pair $(M,N) \in \Z \times \Z$ satisfies (\ref{eq:4f}).
Put $n_1=(M-3N)/2,\ n_2=N$ and $n=n_1/n_2$.
Then $n_1$ and $n_2$ satisfy the following.
\begin{enumerate}
\item[(1)] $n_1$ and $n_2$ are coprime integers.
\item[(2)] $n_1^2+3n_1n_2+9n_2^2=\mathfrak f $.
\item[(3)] $f_n(X)$ is irreducible and the conductor of the cyclic cubic field of $L_n=\Q (\rho_n)$ is $\mathfrak f$.
\item[(4)] $n_2^3 \mu(\mathfrak f) f_n(X)=P\left(\mu (\mathfrak f)\left(n_2X+\frac{1-n_1}{3}  \right) \right)$ holds, where $P(X)$ is the period polynomial given by 
(\ref{eq:P-poly}) whose roots are the Gaussian periods $\eta_0,\eta_1,\eta_2$ of $L_n$.
\item[(5)] $ \{ \eta_0, \eta_1, \eta_2 \}=\left\{ \mu (\mathfrak f) \left( n_2 \rho_n +\frac{1-n_1}{3} \right),  \mu (\mathfrak f) \left( n_2 \rho'_n +\frac{1-n_1}{3} \right), 
 \mu (\mathfrak f) \left( n_2 \rho''_n +\frac{1-n_1}{3} \right) \right\}$.
\end{enumerate}
Furthermore, all cyclic cubic fields with conductor $\mathfrak f$ are given by $L_n$ for such $n=n_1/n_2$.
\end{theo}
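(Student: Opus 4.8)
The plan is to reduce the whole statement to two elementary ingredients: a manipulation of the Diophantine identity~(\ref{eq:4f}), and a single polynomial identity obtained by substituting a linear change of variable into the explicit formula~(\ref{eq:P}). Throughout I fix a cyclic cubic field $L$ of conductor $\mathfrak f$ whose Gaussian periods $\eta_0,\eta_1,\eta_2$ have period polynomial $P(X)$ equal to the right-hand side of~(\ref{eq:P}) for the given pair $(M,N)$; the existence of such an $L$ is part of the classical theory of Gaussian periods recalled in Section~\ref{sec:intro}.

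First I would record that, reducing~(\ref{eq:4f}) modulo $8$ and using that $\mathfrak f$ is odd (a product of primes $\equiv 1\pmod 3$), one gets $M\equiv N\pmod 2$; hence $n_1=(M-3N)/2\in\Z$, with $2n_1+3n_2=M$ and $n_2=N$. Squaring these relations gives
\[
4\,(n_1^2+3n_1n_2+9n_2^2)=(2n_1+3n_2)^2+27n_2^2=M^2+27N^2=4\mathfrak f,
\]
which is~(2); and~(1) follows because a common prime divisor of $n_1$ and $n_2$ would have its square dividing the squarefree integer $\mathfrak f$.

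The core of the argument is the identity~(4). Put $\varepsilon=\mu(\mathfrak f)\in\{\pm1\}$ and $w=(1-n_1)/3$, and expand $P\bigl(\varepsilon(n_2X+w)\bigr)$ using~(\ref{eq:P}), comparing it term by term with $\varepsilon n_2^{3}f_n(X)=\varepsilon\bigl(n_2^{3}X^{3}-n_1n_2^{2}X^{2}-(n_1+3n_2)n_2^{2}X-n_2^{3}\bigr)$. The coefficients of $X^{3}$ and $X^{2}$ agree immediately (using $3w-1=-n_1$); the coefficient of $X$ reduces to $3w^{2}-2w+\tfrac{1-\mathfrak f}{3}=-n_1n_2-3n_2^{2}$, which follows from $3w^{2}-2w=\tfrac{n_1^{2}-1}{3}$ and~(2); and the constant term reduces, after clearing the denominator $27$, to
\[
(1-n_1)^{3}-3(1-n_1)^{2}+3(1-\mathfrak f)(1-n_1)-(M-3)\mathfrak f-1=-27n_2^{3},
\]
which, after inserting $M=2n_1+3n_2$ and~(2), collapses to $(n_1^{2}+3n_1n_2+9n_2^{2})(n_1-3n_2)=n_1^{3}-27n_2^{3}$. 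This proves~(4). Since $P$ is irreducible (its roots generate the cubic field $L\neq\Q$), and~(4) exhibits $f_n$ as the image of $P$ under an invertible affine substitution over $\Q$, the polynomial $f_n$ is irreducible and $L_n=\Q(\rho_n)\cong\Q[X]/(P)=\Q(\eta_0)=L$ has conductor $\mathfrak f$; this is~(3). Substituting $X=\rho_n,\rho_n',\rho_n''$ into~(4) then shows that the three distinct numbers $\varepsilon(n_2\rho_n+w),\ \varepsilon(n_2\rho_n'+w),\ \varepsilon(n_2\rho_n''+w)$ are roots of $P$, hence form the set $\{\eta_0,\eta_1,\eta_2\}$; that is~(5). (As a check, one also computes $\disc(P)=(n_2\mathfrak f)^{2}$ from $\disc(f_n)=(n^{2}+3n+9)^{2}=(\mathfrak f/n_2^{2})^{2}$.)

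For the final assertion, there are exactly $2^{\nu-1}$ pairs $(M,N)$ solving~(\ref{eq:4f}) and exactly $2^{\nu-1}$ cyclic cubic fields of conductor $\mathfrak f$; distinct such pairs have distinct $M$, hence produce polynomials $P$ with distinct constant terms, hence by~(5) pairwise non-isomorphic fields $L_n$. Thus $(M,N)\mapsto L_n$ is injective, and therefore --- the two sets being finite of equal cardinality --- bijective, so every cyclic cubic field of conductor $\mathfrak f$ is some $L_n$. (Alternatively, one may invoke the genericity of Shanks' family: any such field equals $L_m$ for some $m\in\Q$, and the conductor constraint, via $\disc(f_m)=(m^{2}+3m+9)^{2}$, forces $m=m_1/m_2$ with $m_1^{2}+3m_1m_2+9m_2^{2}=\mathfrak f$, i.e.\ $m$ is one of our $n$'s.) I expect the main obstacle to be not any single computation --- the identity in~(4), though lengthy, is mechanical --- but the careful handling of the sign and congruence normalizations ($M\equiv 2\pmod 3$, $N>0$) together with the precise matching of a pair $(M,N)$ to the period polynomial in~(\ref{eq:P}); it is there that one must rely on the cited structure theory of Gaussian periods rather than on a direct calculation.
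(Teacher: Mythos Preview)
Your argument is correct, but note first that this theorem is not proved in the present paper at all: it is quoted from the companion paper~\cite{A2}, and here only the wildly ramified analogue (Theorem~\ref{theo:main}) is established. That said, the comparison is still instructive. Your route is precisely the ``direct calculation'' alternative the author flags in Remark~\ref{rem-of- theo}(1): take the explicit formula~(\ref{eq:P}) for $P(X)$ as a black box, verify the polynomial identity~(4) by brute expansion, and read off (3), (5) and the counting argument as immediate consequences. The paper's method for Theorem~\ref{theo:main} (and, by design, presumably that of~\cite{A2} for Theorem~\ref{theo:pre-main}) deliberately avoids~(\ref{eq:P}): it first proves (1)--(3) independently, then invokes the Galois-module result of Lemma~\ref{lem:Aoki} to exhibit $\alpha+1=n_2\rho_n-\tfrac{n_1}{3}+1$ as a free generator of $\calO_{L_n}$ over the associated order $\calA_{L_n/\Q}$, lists all twelve such generators via the explicit unit group $\calA_{L_n/\Q}^{\times}$ (Corollary~\ref{cor:p=3}), and matches one of them with the Leopoldt generator $\eta_0+1$; a Gauss-sum congruence then pins down the sign $\mu(\mathfrak f/9)$. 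Your approach is shorter and entirely elementary once~(\ref{eq:P}) is granted, but it uses the period-polynomial formula as input; the paper's approach is more structural and in fact recovers~(\ref{eq:P}) as a by-product (again Remark~\ref{rem-of- theo}(1)), at the cost of importing the machinery of~\cite{A1} and the unit-group analysis of Section~\ref{sec:Af}. One small point: in your final injectivity step you implicitly use that the period polynomial $P$ is an invariant of the field $L$ (so distinct constant terms force distinct fields); this is true because $\eta_0=\Tr_{\Q(\zeta_{\mathfrak f})/L}(\zeta_{\mathfrak f})$ is intrinsically attached to $L$, but it is worth saying explicitly, since it is exactly the matching of $(M,N)$ to a specific field that you yourself identify as the delicate point.
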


We will give a similar theorem in the case of wildly ramified.
The main result of this paper is as follows.
\begin{theo}\label{theo:main}
Let $\mathfrak f =3^2p_1\cdots p_{\nu}$ be an integer where $p_1,\ldots,p_{\nu}$ are different prime numbers which satisfy
$p_1\equiv \cdots \equiv p_{\nu} \equiv 1\pmod{3}$, and a pair $(M,N) \in \Z \times \Z$ satisfies (\ref{eq:4f}).
Put $n_1=(M-3N)/2,\ n_2=N$ and $n=n_1/n_2$.
Then $n_1$ and $n_2$ satisfy the following.
\begin{enumerate}
\item[(1)] $n_1$ and $n_2$ are coprime integers.
\item[(2)] $n_1^2+3n_1n_2+9n_2^2=\mathfrak f $.
\item[(3)] $f_n(X)$ is irreducible and the conductor of the cyclic cubic field of $L_n=\Q (\rho_n)$ is $\mathfrak f$.
\item[(4)] $n_2^3 \mu(\mathfrak f/9) f_n(X)=P\left(\mu (\mathfrak f/9)\left(n_2X-\frac{n_1}{3}  \right) \right)$ holds, where $P(X)$ is the period polynomial given by 
(\ref{eq:P-poly}) whose roots are the Gaussian periods $\eta_0,\eta_1,\eta_2$ of $L_n$.
\item[(5)] $ \{ \eta_0, \eta_1, \eta_2 \}=\left\{ \mu (\mathfrak f/9) \left( n_2 \rho_n -\frac{n_1}{3} \right),  \mu (\mathfrak f/9) \left( n_2 \rho'_n -\frac{n_1}{3} \right), 
 \mu (\mathfrak f/9) \left( n_2 \rho''_n -\frac{n_1}{3} \right) \right\}$.
\end{enumerate}
Furthermore, all cyclic cubic fields with conductor $\mathfrak f$ are given by $L_n$ for such $n=n_1/n_2$.
\end{theo}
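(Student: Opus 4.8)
The plan is to follow the structure of Theorem~\ref{theo:pre-main}, with the extra work concentrated at the wildly ramified prime $3$. Throughout I write $\mu=\mu(\mathfrak f/9)\in\{\pm1\}$ (so $\mathfrak f/9=p_1\cdots p_\nu$ is squarefree) and note that $n_1=(M-3N)/2$, $n_2=N$ are the same as $M=2n_1+3n_2$, $N=n_2$. For parts (1) and (2): reducing $4\mathfrak f=M^2+27N^2$ modulo $4$ forces $M\equiv N\pmod2$, and since $M=3M_0$ this gives $M_0\equiv N\pmod2$, so $n_1=3(M_0-N)/2\in\Z$ and $n_2=N\in\Z$, with moreover $3\mid n_1$ and $3\nmid n_2$. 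Substituting $M=2n_1+3n_2$, $N=n_2$ into \eqref{eq:4f} and dividing by $4$ gives $n_1^2+3n_1n_2+9n_2^2=\mathfrak f$, which is (2). Any common prime factor $q$ of $n_1$ and $n_2$ would then satisfy $q^2\mid\mathfrak f$, forcing $q=3$ since $\mathfrak f/9$ is squarefree, contrary to $3\nmid n_2$; hence $\gcd(n_1,n_2)=1$, proving (1).

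Next I would show $f_n(X)$ is irreducible over $\Q$. As $f_n$ is a generic cyclic cubic polynomial it is either irreducible or splits completely over $\Q$, so it suffices to rule out a rational root. Passing to the monic integral model $F(Y)=Y^3-n_1Y^2-n_2(n_1+3n_2)Y-n_2^3$ (with $Y=n_2X$), suppose $F(Y_0)=0$ for some $Y_0\in\Z$. Since $3\mid n_1$, Fermat's little theorem gives $F(Y)\equiv Y-n_2\pmod3$, so $Y_0\equiv n_2\not\equiv0\pmod3$. Writing $n_1=3m$ and $Y_0=n_2+3k$, a short computation yields $F(Y_0)\equiv-3n_2^2(2m+n_2)\pmod9$, hence $2m+n_2\equiv0$, i.e. $m\equiv n_2\pmod3$; but then $m^2+mn_2+n_2^2\equiv3m^2\equiv0\pmod3$, contradicting $27\nmid\mathfrak f=9(m^2+mn_2+n_2^2)$. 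So $f_n$ is irreducible and $L_n=\Q(\rho_n)$ is a cyclic cubic field. I expect this to be the main obstacle: it is exactly the place where the hypotheses $M_0\equiv2\pmod3$, $N\not\equiv0\pmod3$ and $27\nmid\mathfrak f$ enter, and it differs structurally from the tamely ramified case (where instead $3\nmid n_1$).

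For (3)--(5) I would let $L$ be the cyclic cubic field of conductor $\mathfrak f$ corresponding to the pair $(M,N)$ (the representations in \eqref{eq:4f} being in bijection with these fields, as recalled in the Introduction), with Gaussian periods $\eta_0,\eta_1,\eta_2$ and period polynomial $P$, and work with a primitive cubic Dirichlet character $\chi$ of conductor $\mathfrak f$ cutting out $L$, with Gauss sum $g(\chi)$. From \eqref{eq:period} and the Ramanujan-sum identity $\sum_{a\in(\Z/\mathfrak f\Z)^{\times}}\zeta_{\mathfrak f}^{a}=\mu(\mathfrak f)=0$ one gets $3\eta_0=g(\chi)+g(\bar\chi)$; combined with $g(\chi)g(\bar\chi)=\mathfrak f$ and, from the evaluation of the cubic Jacobi sum together with its multiplicativity and the congruence $J(\chi,\chi)\equiv-1\pmod3$ (the same input as in the tamely ramified case), the identity $g(\chi)^3+g(\bar\chi)^3=\mu\,\mathfrak fM$, this shows that $\Sigma:=3\eta_0$ is a root of $\Sigma^3-3\mathfrak f\Sigma-\mu\,\mathfrak fM$, i.e. $P(X)=X^3-\tfrac{\mathfrak f}{3}X-\mu\,\tfrac{\mathfrak fM}{27}$ (so the wild case of \eqref{eq:P} is recovered along the way, but it is not assumed).

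It then remains to substitute $X\mapsto\mu(n_2X-n_1/3)$ into $P$ and expand: using only $\mathfrak f=n_1^2+3n_1n_2+9n_2^2$, $M=2n_1+3n_2$ and $\mu^2=1$, every coefficient matches and one obtains $n_2^3\,\mu\,f_n(X)=P(\mu(n_2X-n_1/3))$, which is (4). Evaluating at the three roots of $f_n$ (distinct, since $f_n$ is separable), the injective affine map $\rho\mapsto\mu(n_2\rho-n_1/3)$ sends $\{\rho_n,\rho_n',\rho_n''\}$ onto $\{\eta_0,\eta_1,\eta_2\}$, which is (5); in particular $\rho_n\in L$, so $L_n=\Q(\rho_n)\subseteq L$ and, comparing degrees, $L_n=L$, whence the conductor of $L_n$ is $\mathfrak f$ — this completes (3). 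Finally, as $(M,N)$ runs over the solutions of \eqref{eq:4f} the fields $L_n=L$ run over all cyclic cubic fields of conductor $\mathfrak f$ by the same bijection, giving the last assertion.
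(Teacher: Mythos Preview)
Your argument is correct, but it is precisely the ``direct'' route the paper deliberately avoids (see Remark~\ref{rem-of- theo}(1)). You recover the explicit form of $P(X)$ in the wild case by evaluating $g(\chi)^3+g(\bar\chi)^3$ via Jacobi sums (together with the component $\tau(\chi_{3^2})^3$, which you should treat explicitly rather than folding it into ``the same input as in the tamely ramified case''), and then check (4) by a polynomial substitution; (3) and (5) fall out, and the final bijection statement is immediate. The paper instead uses Lemma~\ref{lem:Aoki} (the Galois module result of \cite{A1}) to see that $\alpha+1$ with $\alpha=n_2\rho_n-n_1/3$ generates $\calO_{L_n}$ over the associated order $\calA_{L_n/\Q}$, determines $\calA_{L_n/\Q}^{\times}$ completely via Theorem~\ref{theo:A} and Corollary~\ref{cor:p=3}, and compares with Leopoldt's generator $\eta_0+1$ to force $\{\eta_i\}=\{\pm\alpha,\pm\alpha',\pm\alpha''\}$; only a congruence mod $3$ coming from the Gauss sum is then needed to pin down the sign $\mu(\mathfrak f/9)$. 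Your approach is shorter and more elementary, but it leans on the full cubic Gauss/Jacobi sum evaluation (equivalently, on the explicit formula \eqref{eq:P}); the paper's approach is structural, proves \eqref{eq:P} as a by-product rather than assuming it, and showcases the unit group of the associated order, which is of independent interest. A minor point: your irreducibility proof via a mod-$9$ obstruction is correct and pleasant, but note the paper obtains it from Eisenstein applied to $(3n_2)^3 f_n(X/(3n_2)+n/3)$ (Lemma~\ref{lem:irre}); and for the conductor in (3) the paper appeals to \cite[Corollary~1]{A1} \emph{before} identifying $L_n$ with the field $L$ attached to $(M,N)$, whereas you get it \emph{after} that identification---both orderings are fine.
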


We will prove the theorem without known result (\ref{eq:P}) on  the period polynomial, and use recent results \cite{A1}
on the Galois module structure of the ring of integers of cyclic cubic fields.
\begin{rem}\label{rem-of- theo}
\begin{enumerate}
\item[(1)] If we use the explicit formula  (\ref{eq:P}) of $P(X)$,
then we can easily prove (4) and (5) by direct calculation. Conversely, if we have (4) and (5), then we can obtain the explicit formula (\ref{eq:P}).
\item[(2)] By the theorem, we know that if $L$ is a wildly ramified cubic field, then there exists coprime integers $n_1$ and $n_2$ which satisfy $L=L_n$
for $n=n_1/n_2$ and $(n_1^2+3n_1n_2+9n_2^2)/9$ is square-free.
\item[(3)] Let  $M$ and $N$ be integers satisfying (\ref{eq:4f}) and put 
\begin{equation}\label{eq:char}
\dfrac{M+3N\sqrt{-3}}{2}=
\begin{cases}
\pi_1\cdots \pi_{\nu} &\text{if $3\nmid \mathfrak f$}, \\
3\zeta_3^{\pm 1} \pi_1\cdots \pi_{\nu} &\text{if $3 |\mathfrak f$},
\end{cases}
\end{equation}
where $\pi_i \in \Z [\zeta_3]$ are prime elements which divide $p_i$ and 
$
-\tau (\chi_{p_i})^3 =p_i \pi_i$ for the character $\chi_{p_i}$ defined by $\chi_{p_i}(a) \equiv a^{(p_i-1)/3}\pmod{(\pi_i)}$,
and $\displaystyle{ \tau (\chi_{p_i})=\sum_{a\in (\Z/p_i\Z)^{\times}} \chi_{p_i} (a) \zeta_{p_i}^a }$ is the Gaussian sum.
Let $\chi_{3^2}$ be the character defined by  $\chi_{3^2}(a) =\zeta_3^{\pm(a^2-1)/3}$ (double sign in same order in (\ref{eq:char})). Put
\[
\chi=
\begin{cases}
\chi_{p_1} \cdots \chi_{p_{\nu}} & \text{if $3\nmid \mathfrak f$},\\
\chi_{3^2} \chi_{p_1}\cdots \chi_{p_{\nu}} &   \text{if $3 |\mathfrak f$}.
\end{cases}
\]
Then the cyclic cubic fields $L_n$ of Theorems~\ref{theo:pre-main} and \ref{theo:main} are the fields corresponding to ${\rm Ker}\, \chi  \leq (\Z/\mathfrak f\Z)^{\times}$
(\cite[p.~12--13]{H}).
\end{enumerate}
\end{rem}
\section{Preliminaries}\label{sec:Pre}

In this section, 
we prove some lemmas and a  theorem used in the proof of Theorem~\ref{theo:main}.
\begin{lem}\label{lem:irre}
Let $n=n_1/n_2$ be a rational number where the integers $n_1$ and $n_2$ are coprime. Suppose that $3|n_1, 9 || \Delta_n$ and $\Delta_n/9$ is square-free,
where $\Delta_n=n_1^2+3n_1n_2+9n_2^2$.
Then the cubic polynomial $f_n(X)$ is irreducible over $\Q$.
\end{lem}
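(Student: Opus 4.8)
The plan is to show $f_n(X)$ has no root in $\Q$; since it has degree $3$, this suffices for irreducibility. Write $n=n_1/n_2$ in lowest terms and clear denominators: a rational root $\rho$ of $f_n$ would satisfy $n_2\rho^3-n_1\rho^2-(n_1+3n_2)\rho-n_2=0$. The polynomial $g(Y)=n_2Y^3-n_1Y^2-(n_1+3n_2)Y-n_2 \in \Z[Y]$ is (up to the unit leading coefficient issue) the natural integral model; in fact $f_n(X)=g(X)/n_2$ when $n_2\neq 0$, and one checks $g(Y)$ is primitive because $\gcd(n_1,n_2)=1$ forces $\gcd(n_2, n_1, n_1+3n_2, n_2)=1$. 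By Gauss's lemma it is then enough to prove $g(Y)$ has no integer root. Any integer root $r$ of $g$ divides the constant term $-n_2$, and reducing $g(r)=0$ modulo $n_2$ gives $-n_1r^2 - n_1 r \equiv 0$, i.e. $n_1 r(r+1)\equiv 0 \pmod{n_2}$; since $\gcd(n_1,n_2)=1$ this says $r(r+1)\equiv 0\pmod{n_2}$. This already pins $r$ down considerably, but the cleanest route is to pass instead to the discriminant.

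The key arithmetic input is the discriminant identity $d(f_n)=(n^2+3n+9)^2$, which homogenizes to: the discriminant of the binary-cubic / integral model $g(Y)$ equals (up to a square factor coming from the leading coefficient) $\Delta_n^2 = (n_1^2+3n_1n_2+9n_2^2)^2$. Concretely, the Hessian-type covariant shows $n_1^2+3n_1n_2+9n_2^2$ is, up to units, the "discriminant" governing ramification of $L_n$ when $f_n$ is irreducible. The strategy is: assume $f_n$ is \emph{reducible}, so it factors as (linear)$\times$(quadratic) over $\Q$, hence over $\Z$ after clearing denominators via Gauss. Then the splitting field of $f_n$ is at most quadratic, so in particular $d(f_n)$ must be a perfect square in $\Q$ — which it visibly is — so that test is vacuous; instead one uses that a reducible cubic over $\Q$ with a rational root $r$ forces the \emph{resolvent} structure: writing $f_n(X)=(X-r)(X^2+bX+c)$ over $\Z$ (after scaling), the discriminant factors as $\operatorname{disc}(X^2+bX+c)\cdot(\text{square})$, and comparing with $(n_1^2+3n_1n_2+9n_2^2)^2$ shows $n_1^2+3n_1n_2+9n_2^2$ itself must be (up to sign/units) a square times a controlled factor. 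The hypothesis $9\,\|\,\Delta_n$ and $\Delta_n/9$ squarefree is designed precisely to make this impossible.

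More efficiently, here is the argument I would actually write down. Suppose $\rho\in\Q$ is a root. Then $\rho=a/n_2$ for some $a\in\Z$ with $a\mid n_2$ (from the primitive integral model), and plugging in yields a relation among $a, n_1, n_2$. Using $3\mid n_1$, reduce the equation $g(a)=0$ modulo $3$: with $n_1\equiv 0$, we get $n_2 a^3 - 3n_2 a - n_2 \equiv n_2(a^3 - 1) \equiv n_2(a-1)^3 \equiv 0 \pmod 3$ (using $a^3\equiv a$), so either $3\mid n_2$ or $a\equiv 1\pmod 3$. If $3\mid n_2$ then $3\nmid n_1$ contradicts... wait, $3\mid n_1$ is given, so $3\mid n_2$ would violate $\gcd(n_1,n_2)=1$; hence $3\nmid n_2$ and $a\equiv 1\pmod 3$. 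Now look at $\Delta_n=n_1^2+3n_1n_2+9n_2^2$: since $3\mid n_1$, write $n_1=3m$, so $\Delta_n = 9m^2+9mn_2+9n_2^2 = 9(m^2+mn_2+n_2^2)$, and $\Delta_n/9 = m^2+mn_2+n_2^2$ is squarefree with $3\nmid n_2$ forcing $3\nmid \Delta_n/9$ (as $m^2+mn_2+n_2^2\equiv n_2^2\not\equiv 0$ when $3\mid m$). The final step is to combine the existence of the rational root $\rho=a/n_2$ with the norm-form structure: the root $\rho$ and its image under multiplication, etc., would force $m^2+mn_2+n_2^2$ to have a nontrivial square factor or be divisible by $3$, contradicting squarefreeness of $\Delta_n/9$.

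The main obstacle is making the last implication rigorous and self-contained: translating "$f_n$ has a rational root" into a divisibility/square constraint on $\Delta_n/9$ without invoking the full ramification theory. I expect the cleanest path is to note that if $f_n(X)=(X-\rho)q(X)$ with $\rho\in\Q$, then $\operatorname{disc}(f_n)=q(\rho)^2\operatorname{disc}(q)$, so $(n^2+3n+9)^2=q(\rho)^2\operatorname{disc}(q)$, forcing $\operatorname{disc}(q)$ to be a perfect square, say $\operatorname{disc}(q)=t^2$; then $n^2+3n+9=\pm q(\rho)t$. Clearing denominators through $n=n_1/n_2$ and tracking powers of $3$ and squarefreeness of $m^2+mn_2+n_2^2$ should yield the contradiction by an elementary $3$-adic and prime-by-prime valuation count. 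This is the technical heart; everything before it is routine.
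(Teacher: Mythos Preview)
Your proposal has a genuine gap, and you already named it yourself: the final step --- turning the existence of a rational root into a contradiction with the squarefreeness of $\Delta_n/9$ via a valuation count --- is never carried out. Everything you wrote before that point is setup; the actual proof is missing. There is also an error earlier: from primitivity of $g(Y)=n_2Y^3-n_1Y^2-(n_1+3n_2)Y-n_2$ and Gauss's lemma you may conclude that irreducibility over $\Q$ is equivalent to irreducibility over $\Z$, but a linear factor of $g$ over $\Z$ can be $aY-b$ with $a\mid n_2$ and $b\mid n_2$, so it does \emph{not} suffice to rule out integer roots of $g$. Your subsequent mod-$3$ computation assumes an integer root $a$, so that step does not cover the general case either.

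The paper avoids all of this by a direct Eisenstein argument after a Tschirnhaus shift. One computes
\[
(3n_2)^3\, f_n\!\left(\frac{X}{3n_2}+\frac{n}{3}\right)=X^3-3\Delta_n X-(2n_1+3n_2)\Delta_n,
\]
checks that $\Delta_n/9$ and $2n_1+3n_2$ are coprime (an easy prime-by-prime argument using $4\Delta_n=(2n_1+3n_2)^2+27n_2^2$), and then applies Eisenstein's criterion at any prime $p\mid \Delta_n/9$: such $p$ is $\ne 2,3$, divides $\Delta_n$ exactly once, does not divide $2n_1+3n_2$, and so $p\mid 3\Delta_n$, $p\,\|\,(2n_1+3n_2)\Delta_n$, $p\nmid 1$. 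The edge case $\Delta_n/9=1$ is handled separately (it forces $n\in\Z$). This is a one-paragraph proof; I would recommend abandoning the discriminant/resolvent route and writing down the transformation above instead.
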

\begin{proof}
First, we show that $\Delta_n/9$ and $2n_2+3n_2$ are coprime.
Let $p$ be a prime number which divides both $\Delta_n/9 $ and $2n_1+3n_2$. We can easily check $p\ne 2,3$ since
$2\nmid (\Delta_n/9)$ and $3\nmid (\Delta_n/9)$.
Furthermore, since $4\Delta_n =(2n_1+3n_2)^2+27n_2^2$ and $p\ne 3$, we have $p |n_2$. Hence we have $p |2n_1$ since $p$ divides both $2n_1+3n_2$ and $n_2$.
This is a contradiction since $p\ne 2 $ and $(n_1,n_2)=1$. Therefore $\Delta_n/9$ and $2n_1+3n_2$ are coprime. 
The irreducibility of $f_n(X) $ can be obtained  by using Eisenstein's criterion for the right-hand side of
\[
(3n_2)^3 f_n \left( \frac{X}{3n_2}+\frac{n}{3} \right) =X^3-3\Delta_n X -(2n_1+3n_2)\Delta_n
\]
and a prime factor of $\Delta_n/9$ if $\Delta_n/9 \ne 1$. If $\Delta_n/9 =1$, then we have $n_2=\pm 1$ since
$1=\Delta_n/9 =(n_1/3+n_2/2)^2+3/4\, n_2^2 \geq 3/4\, n_2^2$, and hence $n=n_1/n_2 \in \Z$ and
$f_n(x)$ is irreducible over $\Q$ (in this case, we have $n=0,-3$ and $L_0=L_{-3}$).
\end{proof}
\begin{lem}\label{lem:not-L}
Let $n=n_1/n_2$ and $n'=n_1'/n_2'$ be rational numbers where the integers $n_1, n_2$ and $ n_1', n_2'$ satisfy $(n_1, n_2)=(n_1', n_2')=1$.
Put $\Delta_n=n_1^2+3n_1n_2+9n_2^2$ and $\Delta_{n'}= {n_1'}^2+3n_1'n_2'+9{n_2'}^2$.
Suppose that the following (i) $\sim$ (iii) holds.
\begin{enumerate}
\item[(i)] $3|n_1,\, 3|n_1'$ and $2n_1/3+n_2 \equiv 2n_1'/3+n_2' \equiv 2 \pmod{3}$.
\item[(ii)] $\Delta_n/9$ and $\Delta_{n'}/9$ are square-free, and $9 || \Delta_n,\, 9 ||\Delta_{n'}$.
\item[(iii)] $2n_1+3n_2 \ne 2n_1'+3n_2'$.
\end{enumerate}
Then we have $L_n \ne L_{n'}$.
\end{lem}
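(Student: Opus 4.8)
The plan is to argue by contradiction. Suppose $L_n=L_{n'}=:L$; I will show $2n_1+3n_2=2n_1'+3n_2'$, contradicting~(iii). Set $M:=2n_1+3n_2$ and $M':=2n_1'+3n_2'$. Expanding, one has the identities $4\Delta_n=M^2+27n_2^2$ and $4\Delta_{n'}=M'^2+27n_2'^2$; by~(i) we may write $M=3M_0$, $M'=3M_0'$ with $M_0\equiv M_0'\equiv 2\pmod 3$, and since $3\mid n_1$ and $(n_1,n_2)=1$ we have $3\nmid n_2$ (likewise $3\nmid n_2'$). The argument has two stages: (1) show that, under~(i) and~(ii), the conductor of $L_n$ equals $\Delta_n$ --- hence, $L$ being fixed, $\Delta_n=\Delta_{n'}$; (2) use the classical description of cyclic cubic fields by cubic characters and Gauss sums to see that $M$ is then an invariant of $L$, forcing $M=M'$.

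For stage~(1) I would work prime by prime in $L_n=\Q(\rho_n)$, using the two integral generators $\alpha:=n_2\rho_n$, whose minimal polynomial $Y^3-n_1Y^2-n_2(n_1+3n_2)Y-n_2^3$ has discriminant $(n_2\Delta_n)^2$, and $\beta:=3n_2\rho_n-n_1$, a root of $X^3-3\Delta_nX-M\Delta_n$ as in the proof of Lemma~\ref{lem:irre}. From $\alpha$ one gets $\mathfrak f(L_n)\mid n_2\Delta_n$. If $p\mid\Delta_n/9$ then $p\ne 3$, $p\,\|\,\Delta_n$, and $p\nmid M$ (by the coprimality shown in the proof of Lemma~\ref{lem:irre}), so the polynomial of $\beta$ is $p$-Eisenstein and $p$ is totally ramified. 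If $p\mid n_2$ then $p\ne 3$, $p\nmid\Delta_n$, and $v_p(n)=v_p(n+3)=-v_p(n_2)<0$, so the Newton polygon of $f_n$ over $\Q_p$ has the three distinct slopes $-v_p(n_2),0,v_p(n_2)$; hence $f_n$ splits into linear factors over $\Q_p$, $p$ splits completely, and $p\nmid\mathfrak f(L_n)$. It remains to treat $p=3$. Writing $\Delta_n=9d$ with $3\nmid d$ (by~(ii)), the element $\beta/3=n_2\rho_n-n_1/3\in\calO_{L_n}$ is a root of $g(X):=X^3-3dX-dM_0$, and $g(X)\equiv(X-c)^3\pmod 3$ for $c\equiv dM_0\pmod 3$. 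The key point is $v_3(g(c))=1$: using $M_0\equiv 2\pmod 3$, $4d=M_0^2+3n_2^2$, and $3\nmid n_2$ one finds $(M_0,d)\equiv(2,4),(5,7),(8,1)\pmod 9$, and in each case $g(dM_0)=dM_0\,(d^2M_0^2-3d-1)$ with $d^2M_0^2-3d-1\equiv 6\pmod 9$; since $g(c)\bmod 9$ does not depend on the representative $c$, this gives $v_3(g(c))=1$. Hence the Newton polygon of $g(X+c)$ over $\Q_3$ is a single segment of slope $-1/3$, so $3$ is totally (wildly) ramified in $L_n$, contributing exactly $3^2$ to the conductor. Therefore $\mathfrak f(L_n)=9d=\Delta_n$, and since the conductor depends only on $L$, the hypothesis $L_n=L_{n'}$ gives $\Delta_n=\Delta_{n'}=:\Delta=9d$.

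For stage~(2), $(M,N)=(M,n_2)$ and $(M',N')=(M',n_2')$ are now two pairs of the type~(\ref{eq:4f}) for the same modulus $\mathfrak f=\Delta$, normalized as in~(i). I would invoke the classical correspondence recalled in Remark~\ref{rem-of- theo}(3): $L$ determines the cubic character $\chi\bmod\mathfrak f$ up to $\chi\mapsto\bar\chi$, hence each local factor $\chi_{p_i}$ up to conjugation, hence the primes $\pi_i\mid p_i$ (fixed by $-\tau(\chi_{p_i})^3=p_i\pi_i$) up to simultaneous conjugation, hence $\tfrac{M+3N\sqrt{-3}}{2}=3\zeta_3^{\pm1}\pi_1\cdots\pi_\nu$ up to complex conjugation and a unit in $\mu_6$; the normalization in~(\ref{eq:4f}) and~(i) then singles out a unique $(M,N)$. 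Thus $M$ is an invariant of $L$, so $M=M'$, contradicting~(iii). Alternatively this stage can be run with the trace-zero integer $\beta/3$, for which $\Tr((\beta/3)^2)=6d$ and $N_{L/\Q}(\beta/3)=dM_0$, together with the structure of $\calO_L$ as a $\Z[G]$-module from~\cite{A1}, which pins $\beta/3$ down up to $G$ and $\pm1$ and hence fixes $dM_0$. Either way one must avoid quoting the explicit period-polynomial formula~(\ref{eq:P}) or parts~(4)--(5) of Theorem~\ref{theo:main}, which this lemma is used to prove.

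The step I expect to be the main obstacle is the $p=3$ analysis in stage~(1): this is precisely the new feature absent from the tamely ramified Theorem~\ref{theo:pre-main}, and carrying it out requires descending to $\Q_3$, recognizing the triple root modulo $3$, translating, and pinning a $3$-adic valuation by a computation modulo $9$ --- and it is exactly here that the congruence in hypothesis~(i) is used in an essential way. A secondary point of care is keeping stage~(2) logically independent of~(\ref{eq:P}).
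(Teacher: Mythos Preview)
Your argument is correct in outline but takes a very different, and much longer, route than the paper's. The paper's proof is essentially a single citation: it rewrites $f_n$ and $f_{n'}$ via $X\mapsto 3X/n_2+n/3$ into the normalized shape $X^3-\tfrac{d}{3}X-\tfrac{dM_0}{27}$ with $d=\Delta_n/9$ and $M_0=2n_1/3+n_2$, verifies (using \cite[Lemma~3]{A1} and hypotheses (i)--(ii)) that $d$ is a product of distinct primes $\equiv 1\pmod 3$ and $M_0\equiv 2\pmod 3$, and then invokes \cite[Lemma~6.4.5]{Co}, which asserts precisely that two such normalized cubics with distinct data $(d,M_0)$ generate distinct cyclic cubic fields. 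No conductor computation and no character-theoretic analysis appears. Your Stage~1 correctly reproves, by a prime-by-prime local analysis, the conductor formula the paper elsewhere imports from \cite[Corollary~1]{A1}; your $p=3$ Newton-polygon step via $g(X)=X^3-3dX-dM_0$ and the mod-$9$ computation is right. For Stage~2, however, be careful with the first variant: invoking Remark~\ref{rem-of- theo}(3) to identify the character-theoretic pair attached to $L_n$ with $(2n_1+3n_2,n_2)$ presupposes exactly the correspondence that Theorem~\ref{theo:main} (hence this lemma) is meant to establish, so that route is circular as written. Your alternative via Lemma~\ref{lem:Aoki} and Corollary~\ref{cor:p=3} is clean and avoids this: if $L_n=L_{n'}$ then both $\beta/3$ and $\beta'/3$ lie in ${\bf e}_{\mathfrak f}\calO_L$ and, after adding $1$, generate $\calO_L$ over $\calA_{L/\Q}$, so $\beta'/3\in\{\pm\sigma^j(\beta/3):j=0,1,2\}$; taking norms gives $dM_0'=\pm dM_0$, and since $d=d'$ by Stage~1 and $M_0\equiv M_0'\equiv 2\pmod 3$, the sign is forced and $M_0=M_0'$. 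That device is exactly the one the paper deploys later in the proof of Theorem~\ref{theo:main} to match the Gaussian periods with the $\alpha$'s, so your alternative Stage~2 effectively anticipates that argument rather than bypassing it.
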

\begin{proof}
 We have
\begin{align}
(3n_2)^3 f_n \left(\frac{3X}{n_2} +\frac{n}{3} \right) =3^6 \left(X^3-\dfrac{\Delta_n}{9} \cdot \dfrac{X}{3} -\left( \dfrac{2n_1}{3} +n_2 \right) \dfrac{\Delta_n}{9}\cdot
\dfrac{1}{27} \right),\notag \\
\label{eq:fn-lemma} \\
(3n_2')^3 f_n \left(\frac{3X}{n_2'} +\frac{n'}{3} \right) =3^6 \left(X^3-\dfrac{\Delta_{n'}}{9} \cdot \dfrac{X}{3} -\left( \dfrac{2n'_1}{3} +n'_2 \right) \dfrac{\Delta_{n'}}{9}\cdot
\dfrac{1}{27} \right). \notag 
\end{align}
We can show that all prime numbers $p$ which divides $\Delta_n/9$ or $\Delta_{n'}/9$ satisfy $p\equiv 0,1 \pmod{3}$ (see \cite[Lemma~3]{A1}).
Furthermore, we have $p\ne 3$  from the assumption (ii).
Therefore, both $\Delta_n/9$ and $\Delta_{n'}/9$ are products of distinct prime numbers $p$ satisfying $p\equiv 1 \pmod{3}$. Furthermore, we have
$2n_1/3+n_2 \equiv 2n_1'/3 +n_2' \equiv 2\pmod{3}$ from (i). 
Since $f_n(X)$ and $f_{n'}(X)$ are irreducible over $\Q$ by Lemma~\ref{lem:irre}, both $L_n$ and $L_{n'}$ are cyclic cubic fields.
Using these facts  and \cite[Lemma~6.4.5]{Co}, the roots of the right-hand sides of the two equations of 
(\ref{eq:fn-lemma}) give different cyclic cubic fields, and we have $L_n \ne L_{n'}$.
\end{proof}

Let $L/\Q$ be a finite abelian extension with Galois group $G$. Leopoldt showed that the ring of integers $\calO_L$ of $L$ is a free module
of rank $1$ over the associated order $\calA_{L/\Q}:= \{ x\in \Q [G]\ |\ x\calO_L \subset \calO_L \}$ (\cite[Satz~6]{Leo}. \cite[Theorem~2]{Let}).
The following lemma is a part of a recent result of \cite[Corollary~5]{A1},  which is a generalization of the results
\cite{HA} and \cite{OA} for the simplest cubic field.
\begin{lem}\label{lem:Aoki}
Let $n=n_1/n_2$ be a rational number where the integers $n_1$ and $n_2$ are coprime,
$\mathfrak f$ be the conductor of $L_n$. Suppose that $9 || \Delta_n$ and  $\Delta_n/9$ is square-free,
where $\Delta_n=n_1^2+3n_1n_2+9n_2^2$. Put $\alpha=n_2\rho_n-n_1/3$. Then we have  $\alpha \in {\bf e}_{\mathfrak f} \calO_{L_n}$ for ${\bf e}_{\mathfrak f}=
(2-\sigma -\sigma^2)/3$ and 
 $\alpha+1$ is a generator of $\calO_{L_n}$ over $\calA_{L_n/\Q}$, namely we have $\calO_{L_n}=\calA_{L_n/\Q}(\alpha +1)$
(see \S \ref{sec:Af} for the definition of ${\bf e}_{\mathfrak f} \in \Q[G]$).
\end{lem}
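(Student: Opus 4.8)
The plan is to treat the two assertions separately. For the first, recall that ${\bf e}_{\mathfrak f}=(2-\sigma-\sigma^2)/3=1-\tfrac13(1+\sigma+\sigma^2)$ acts on $L_n$ as the trace-zero projector $x\mapsto x-\tfrac13\Tr_{L_n/\Q}(x)$. I would first check that $\beta:=n_2\rho_n$ is an algebraic integer: substituting $X=Y/n_2$ into $f_n$ and clearing denominators produces the monic integral polynomial $Y^3-n_1Y^2-(n_1+3n_2)n_2\,Y-n_2^3$, of which $\beta$ is a root. Since the sum of the roots of $f_n$ equals $n$, we have $\Tr_{L_n/\Q}(\rho_n)=n=n_1/n_2$, hence $\Tr_{L_n/\Q}(\beta)=n_1$ and ${\bf e}_{\mathfrak f}(\beta)=\beta-\tfrac13\Tr_{L_n/\Q}(\beta)=n_2\rho_n-n_1/3=\alpha$, which proves $\alpha\in{\bf e}_{\mathfrak f}\calO_{L_n}$. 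Moreover $9\mid\Delta_n$ forces $3\mid n_1$ (reduce $\Delta_n=n_1^2+3n_1n_2+9n_2^2$ modulo $3$), so $n_1/3\in\Z$ and therefore $\alpha\in\calO_{L_n}$, whence $\theta:=\alpha+1\in\calO_{L_n}$.

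For the generator claim I would argue locally. By Leopoldt's theorem $\calO_{L_n}$ is free of rank one over $\calA_{L_n/\Q}$, and $\calO_{L_n}=\calA_{L_n/\Q}\theta$ holds if and only if $\calO_{L_n}\otimes\Z_p=(\calA_{L_n/\Q}\otimes\Z_p)\,\theta$ for every prime $p$. First I would record that $\theta$ is a normal basis generator over $\Q$, so that $x\mapsto x\theta$ is injective on $\Q[G]$: its trace is $3\ne0$, and for each nontrivial character $\chi$ of $G$ the Lagrange resolvent $\sum_{g\in G}\chi(g)\,g\theta$ equals that of $\alpha$ (the constant $1$ contributes $\sum_g\chi(g)=0$), which is nonzero because $\alpha$ is a nonzero element of the simple component ${\bf e}_{\mathfrak f}L_n\cong\Q(\zeta_3)$. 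At each prime $p\ne3$ the extension $L_n/\Q$ is at most tamely ramified, the ramified primes $\ne3$ being the divisors $p_i$ of $\Delta_n/9$; hence by Noether's theorem $\calA_{L_n/\Q}\otimes\Z_p=\Z_p[G]$ and $\calO_{L_n}\otimes\Z_p$ is $\Z_p[G]$-free. To see $\theta$ generates there it suffices to check that the index $[\calO_{L_n}\otimes\Z_p:\Z_p[G]\theta]$ is trivial, which I would extract from $\Delta_n=\mathfrak f$ together with $(n_1,n_2)=1$ and the coprimality of $\Delta_n/9$ with $2n_1+3n_2$ established in Lemma~\ref{lem:irre}.

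The remaining prime $p=3$ is the main obstacle, and it is exactly where the hypotheses $9\,\|\,\Delta_n$ and $\Delta_n/9$ square-free are used: here $3$ is totally and wildly ramified, so $L_n\otimes\Q_3$ is a totally ramified cyclic cubic extension of $\Q_3$ whose $3$-part of the conductor is $9$ (different exponent $4$), and $\calA_{L_n/\Q}\otimes\Z_3$ is strictly larger than $\Z_3[G]$. I would determine $\calA_{L_n/\Q}\otimes\Z_3$ and the $\Z_3$-lattice $\calO_{L_n}\otimes\Z_3$ from the ramification filtration, organized by the idempotent decomposition $\Q_3[G]=\Q_3\,e_0\oplus\Q_3[G]{\bf e}_{\mathfrak f}$ with $e_0=\tfrac13(1+\sigma+\sigma^2)$, and then verify $(\calA_{L_n/\Q}\otimes\Z_3)\theta=\calO_{L_n}\otimes\Z_3$ by matching $\Z_3$-indices block by block. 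The decomposition $\theta=\alpha+1$ is engineered for precisely this: one computes $e_0\theta=1$, which generates the tame block $e_0\calO_{L_n}=\tfrac13\Tr_{L_n/\Q}(\calO_{L_n})$ (equal to $\Z_3$ since the trace ideal at $3$ is $3\Z_3$), while ${\bf e}_{\mathfrak f}\theta=\alpha$ must be shown to generate the wild block ${\bf e}_{\mathfrak f}\calO_{L_n}$ over $\calA_{L_n/\Q}\otimes\Z_3$. This last verification is the substance of \cite[Corollary~5]{A1}, generalizing the simplest-cubic computations of \cite{HA} and \cite{OA}, and is the step I expect to require the most work.
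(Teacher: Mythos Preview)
The paper does not actually prove this lemma: the text introduces it as ``a part of a recent result of \cite[Corollary~5]{A1}'' and simply quotes it, with no argument given. Your proposal therefore supplies strictly more than the paper. The trace computation showing $\alpha={\bf e}_{\mathfrak f}(n_2\rho_n)$ with $n_2\rho_n\in\calO_{L_n}$, hence $\alpha\in{\bf e}_{\mathfrak f}\calO_{L_n}$, is a clean and correct self-contained argument that the paper omits entirely.

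For the generator assertion your localization outline is a sensible strategy, and the reduction to a normal-basis check plus a local index computation at tame primes is standard. However, at the decisive prime $p=3$ you yourself identify the wild-block verification as ``the substance of \cite[Corollary~5]{A1}'' and defer to it. So in the end your treatment and the paper's coincide: both rest the real content of the lemma on \cite{A1}, and neither reproduces that computation here. There is no discrepancy in approach to flag --- only that the paper cites while you cite after first sketching the surrounding architecture.
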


\section{Structure of the units group of the associated order}\label{sec:Af}

Let $p$ be an odd prime number and $L/\Q$ a cyclic extension of degree $p$ with Galois group $G=\langle  \sigma \rangle$.
The conductor $\mathfrak f$ of $L$ should be
\begin{equation}\label{eq:cond-gen}
\mathfrak f=\begin{cases}
p_1\cdots p_{\nu} & \text{if $p\nmid \mathfrak f $\, (tamely ramified)}, \\
p^2 p_1\cdots p_{\nu} &    \text{if $p | \mathfrak f$\, (wildly ramified)},
\end{cases}
\end{equation}
where $p_1,\ldots, p_{\nu}$ are different prime numbers satisfying $p_1 \equiv \cdots \equiv p_{\nu}\equiv 1
\pmod{p}$.
Let $v_p(x)$  denote the $p$-adic valuation of $x \in \Q$ for a prime number $p$.
For any $m\in \mathbb Z_{>0}$, we put
\[
p (m)=\prod_{\substack{ 
p\mid m
\\
p\neq 2
}} p,\qquad 
q(m) = \prod_{\substack{
p \\
v_p(m) \geq 2}} p^{v_p(m)}.
\]
where the first product runs over all odd prime numbers $p$ dividing $m$, and the second product runs over all
prime numbers $p$ that satisfy $v_p (m)\geq 2$. Put
\[
 \calD (\mathfrak f)= \{ m\in \Z_{>0} \, |\, p(\mathfrak f) | m,\ m| \mathfrak f,\ m\not\equiv 2 \pmod{4}\}.
\]
Let $\mathscr X$ be the group of Dirichlet characters associated to $L$.
 We define {\it a branch class} of $\mathscr X$ for any $m\in \calD (\mathfrak f)$ 
by
\[
\Phi_m = \{ \chi \in \mathscr{X}  \, |\, q(\mathfrak f_{\chi})=q(m) \}.
\]
where $\mathfrak f_{\chi}$ is the conductor of $\chi$.
We have $\mathscr{X}=\coprod_{m\in \calD (\mathfrak f)} \Phi_m$ (disjoint union). For any $\chi \in X$,
let
\[
{\bf e}_{\chi} = 
\frac{1}{[L:\Q]} 
\sum_{g \in G} \chi^{-1} (g) g
\]
be the idempotent. Furthermore, for any $m \in \calD (\mathfrak f)$, let
\[
{\bf e}_m = \sum_{\chi \in \Phi_m} {\bf e}_{\chi}.
\]
Since the branch class $\Phi_m$ is closed under conjugation,
we obtain ${\bf e}_m \in \Q [G]$.
Since the conductor $\mathfrak f$ is given by (\ref{eq:cond-gen}) for a cyclic extension $L/\Q$
of degree $p\, (\ne 2)$, we have
\[
D(\mathfrak f)= 
\begin{cases}
\{ \mathfrak f \} & \text{if $p\nmid \mathfrak f $,} \\
\{ \mathfrak f, \mathfrak f/p \}&    \text{if $p | \mathfrak f$}.
\end{cases}
\]
Leopoldt   (\cite[Satz~6]{Leo}. \cite[Theorem~2]{Let}) showed that       
\begin{equation}\label{eq:O_L}
\calO_L= 
\begin{cases}
\calA_{L/\Q} {\rm Tr}_{\Q(\zeta_{\mathfrak f})/L}(\zeta_{\mathfrak f}) & \text{if $p\nmid \mathfrak f $,} \\
\calA_{L/\Q} ({\rm Tr}_{\Q(\zeta_{\mathfrak f})/L}(\zeta_{\mathfrak f})+1) &    \text{if $p | \mathfrak f$}.
\end{cases}
\end{equation}
and
\begin{equation}\label{eq:A}
\calA_{L/\Q}= 
\begin{cases}
\Z [G][{\bf e}_{\mathfrak f}]=\Z[G] & \text{if $p\nmid \mathfrak f $,} \\
\Z [G][{\bf e}_{\mathfrak f},\, {\bf e}_{\mathfrak f/p} ] &    \text{if $p | \mathfrak f$}.
\end{cases}
\end{equation}
From this result, we know that if $\eta$ is a generator of $\calA_{L/\Q}$-module $\calO_L$,
then there exists $u\in \calA_{L/\Q}^{\times}$ such that $\eta=u {\rm Tr}_{\Q(\zeta_{\mathfrak f})/L}(\zeta_{\mathfrak f})$
(resp. $\eta=u ({\rm Tr}_{\Q(\zeta_{\mathfrak f})/L}(\zeta_{\mathfrak f})+1)$) if $p\nmid \mathfrak f$
(resp. $p|\mathfrak f$), and there is a one-to-one correspondence between the set of all generators of $\calO_L$ and $\calA_{L/\Q}^{\times}$.
In this section, we consider the structure of the group $\calA_{L/\Q}^{\times}$.

First, we consider the group structure of $\Z [G]^{\times}$. Let $U_p =\{ u\in \Z [\zeta_p]^{\times} \, |\, u\equiv \pm 1 \pmod{ (1-\zeta_p)} \}$
and $u_k=(1-\zeta_p^k)/(1-\zeta_p)$ for $k\in \{1,2,\cdots,  (p-1)/2 \}$.
Let $\phi_p(X)=(X^p-1)/(X-1) \in \Z[X]$ be the $p$-th cyclotomic polynomial.
The following lemma was proven in \cite[Theorems~1.6 and 1.7]{AF} except for the injectivity.
\begin{lem}[\cite{AF}]\label{lem:AF}
A group homomorphism 
\[
\nu:\ \Z[G]^{\times} \longrightarrow \Z [\zeta_p]^{\times}, \quad \sigma \mapsto \zeta_p
\]
is injective, $\nu (\Z[G]^{\times})=U_p$ and
\[
\Z [\zeta_p]^{\times}/U_p= \{ \overline{u}_k \, |\, k\in  \{1,2,\ldots, (p-1)/2 \} \},
\]
where $\overline{u}_k=u_k U_p$.
\end{lem}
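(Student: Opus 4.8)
The plan is to handle the three assertions of the lemma separately. The equalities $\nu(\Z[G]^{\times})=U_p$ and the coset description of $\Z[\zeta_p]^{\times}/U_p$ are precisely \cite[Theorems~1.6 and 1.7]{AF}, so the only point that needs a new argument is the injectivity of $\nu$.

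For injectivity I would use the embedding $\Z[G]=\Z[X]/(X^p-1)\hookrightarrow \Q[G]\cong \Q\times\Q(\zeta_p)$, under which the two coordinate projections are the $\Q$-linear extensions of the augmentation map $\varepsilon\colon \Z[G]\to\Z$, $\sigma\mapsto 1$, and of $\nu$, $\sigma\mapsto\zeta_p$. Hence the combined ring homomorphism $(\varepsilon,\nu)\colon \Z[G]\to \Z\times\Z[\zeta_p]$ is injective; equivalently $(X-1)\cap(\phi_p(X))=(X^p-1)$ in $\Z[X]$, which follows from the coprimality of $X-1$ and $\phi_p(X)$ over $\Q$ together with Gauss's lemma. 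Moreover, reducing the second coordinate modulo the prime $(1-\zeta_p)$ lying above $p$ and using $\Z[\zeta_p]/(1-\zeta_p)\cong\Z/p\Z$, one has $\nu(\sigma^{j})=\zeta_p^{\,j}\equiv 1=\varepsilon(\sigma^{j})\pmod{(1-\zeta_p)}$ for every $j$, so $\varepsilon(x)\equiv\nu(x)\pmod{(1-\zeta_p)}$ for all $x\in\Z[G]$.

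Now let $u\in\Z[G]^{\times}$ with $\nu(u)=1$. Since a unital ring homomorphism sends units to units, $\varepsilon(u)\in\Z^{\times}=\{\pm1\}$; and $\varepsilon(u)\equiv\nu(u)=1\pmod{(1-\zeta_p)}$, i.e.\ $\varepsilon(u)\equiv 1\pmod{p}$. As $p$ is odd, this forces $\varepsilon(u)=1$, so $u$ and $1$ have the same image under the injective map $(\varepsilon,\nu)$, whence $u=1$. Therefore $\Ker\nu=\{1\}$.

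I do not expect a genuine obstacle in this argument: it is essentially elementary linear/commutative algebra. The two places that deserve a careful sentence are the injectivity of $(\varepsilon,\nu)$ on $\Z[G]$ and the passage from ``$\varepsilon(u)\equiv 1\pmod{(1-\zeta_p)}$'' to ``$\varepsilon(u)\equiv 1\pmod{p}$''. The oddness of $p$ is exactly what excludes the value $\varepsilon(u)=-1$, which would otherwise also be compatible with $\nu(u)=1$ modulo $(1-\zeta_p)$.
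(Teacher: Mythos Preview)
Your proposal is correct and follows essentially the same approach as the paper: both reduce injectivity of $\nu$ to showing that if $\nu(u)=1$ then the augmentation value $\varepsilon(u)=f(1)$ equals $1$, using that $\varepsilon(u)\in\{\pm1\}$ together with the congruence $\varepsilon(u)\equiv\nu(u)\pmod{(1-\zeta_p)}$ (equivalently, $\phi_p\mid f-1$ in $\Z[X]$ forces $f(1)\equiv 1\pmod p$) and the oddness of $p$. The only difference is cosmetic---the paper phrases everything in terms of representatives $f\in\Z[X]$ while you work with the product embedding $(\varepsilon,\nu)$---but the mathematical content is identical.
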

\begin{proof}
See \cite[Theorems~1.6 and 1.7]{AF} except for the injectivity. We show that $\nu$ is injective. Put
\[
\psi : (\Z[X]/(X^p-1))^{\times} \longrightarrow \Z [\zeta_p]^{\times}, \quad \overline{X} \mapsto \zeta_p.
\]
Since the composition of an isomorphism $\Z[G]^{\times} \overset{\sim}{\longrightarrow} (\Z[X]/(X^p-1))^{\times}$ and $\psi$ is $\nu$,
it suffices to show that $\psi$ is injective. Let $\overline{f} \in {\rm Ker}\, \psi$. Since $f(\zeta_p)=1,\, \phi_p=(X^p-1)/(X-1)\in  \Z[X],\, f-1 \in \Z[X]$ and
$\phi_p$ is monic, $\phi_p$ divides $f-1$ in $\Z[X]$. We have $f(1) \equiv 1\pmod{p}$. Furthermore, we have $f(1)=\pm 1$ from $\overline{f} \in 
(\Z[X]/(X^p-1))^{\times}$ and a ring homomorphism $\Z[X]/(X^p-1) \longrightarrow \Z,\ \overline{f} \mapsto f(1)$. From these facts and $p\ne 2$, we
obtain $f(1)=1$. Therefore, $X-1$ divides $f-1$ in $\Z[X]$. We conclude that $X^p-1=(X-1)\phi_p$ divides $f-1$ in $\Z[X]$, and $\overline{f} =1$ in
$(\Z[X]/(X^p-1))^{\times}$. We obtain that $\psi$ is injective.
\end{proof}

We will prove the following theorem on the structure of the group $\calA^{\times}_{L./\Q}$.
\begin{theo}\label{theo:A}
Let $p$ be an odd prime number and $L/\Q$ a cyclic extension of degree $p$. Let $\mathfrak f $ denote the conductor of $L$.
\begin{enumerate}
\item[(1)] If $p\nmid \mathfrak f$, then we have the following group isomorphism :
\[
\calA_{L/\Q}^{\times} \overset{\sim}{\longrightarrow} U_p,\quad \sigma \mapsto \zeta_p.
\]
\item[(2)] If $p|\mathfrak f$, then we have the following exact sequence of abelian groups :
\[
1 \longrightarrow \{ 1,\, 1-2{\bf e}_{\bf 1} \} \longrightarrow \calA_{L/\Q}^{\times} \overset{\psi}{\longrightarrow} \Z[\zeta_p]^{\times} \longrightarrow 1,
\]
where $\psi (\sigma)=\zeta_p$ and $\displaystyle{ {\bf e}_{\bf 1}=\frac{1}{p}\sum_{i=0}^{p-1} \sigma^i}$ is the idempotent for the trivial character ${\bf 1}$.
\end{enumerate}
\end{theo}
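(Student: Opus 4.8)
The plan is to treat the two cases separately, using Lemma~\ref{lem:AF} in both. In case (1), when $p \nmid \mathfrak f$, equation (\ref{eq:A}) gives $\calA_{L/\Q} = \Z[G]$, so the assertion is exactly Lemma~\ref{lem:AF}: the map $\nu \colon \Z[G]^\times \to \Z[\zeta_p]^\times$, $\sigma \mapsto \zeta_p$, is injective with image $U_p$, hence induces the claimed isomorphism $\calA_{L/\Q}^\times \overset{\sim}{\to} U_p$. Nothing further is needed here.

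For case (2), when $p \mid \mathfrak f$, I would proceed as follows. By (\ref{eq:A}) we have $\calA_{L/\Q} = \Z[G][{\bf e}_{\mathfrak f}, {\bf e}_{\mathfrak f/p}]$, and since $\{\mathfrak f, \mathfrak f/p\}$ exhausts $\calD(\mathfrak f)$ with ${\bf e}_{\mathfrak f/p} = {\bf e}_{\bf 1}$ being the idempotent of the trivial character, one gets ${\bf e}_{\mathfrak f} + {\bf e}_{\bf 1} = 1$, so $\calA_{L/\Q} = \Z[G] + \Z{\bf e}_{\bf 1}$ and there is a ring decomposition $\calA_{L/\Q} \hookrightarrow {\bf e}_{\bf 1}\calA_{L/\Q} \times {\bf e}_{\mathfrak f}\calA_{L/\Q}$, where ${\bf e}_{\bf 1}\calA_{L/\Q} \cong \Z$ (via $\sigma \mapsto 1$) and ${\bf e}_{\mathfrak f}\calA_{L/\Q} \cong \Z[\zeta_p]$ (via $\sigma \mapsto \zeta_p$, using that ${\bf e}_{\mathfrak f}\Z[G] \cong \Z[G]/({\bf e}_{\bf 1}) \cong \Z[\zeta_p]$ since $\phi_p(X)$ is the minimal polynomial). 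Concretely, I would write the composite $\calA_{L/\Q} \to \Z \times \Z[\zeta_p]$, $x \mapsto (x|_{\sigma=1}, \psi(x))$, identify its image, and then take units. The key computations are: (i) the image of $\calA_{L/\Q}$ in $\Z \times \Z[\zeta_p]$ consists of pairs $(a, \beta)$ with $a \equiv \beta \pmod{(1-\zeta_p)}$ — this is the standard fibre-product description of $\Z[G][{\bf e}_{\bf 1}]$; (ii) taking units, $(a,\beta) \in (\Z \times \Z[\zeta_p])^\times$ with $a \equiv \beta \pmod{(1-\zeta_p)}$ forces $a = \pm 1$ and $\beta \in \Z[\zeta_p]^\times$ arbitrary, with the congruence automatically satisfied since $\Z[\zeta_p]^\times \to (\Z[\zeta_p]/(1-\zeta_p))^\times = (\Z/p\Z)^\times$ is surjective — wait, one must check the sign compatibility, and indeed the constraint is precisely $\beta \equiv a \pmod{(1-\zeta_p)}$ which, given $a = \pm 1$, cuts nothing extra because $-1 \equiv p-1$ and one can always adjust... this is the delicate point.

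More carefully, for (ii): an element $x \in \calA_{L/\Q}^\times$ maps to $(a,\beta)$ with $a = \pm 1$ and $\psi(x) = \beta$; conversely, given any $\beta \in \Z[\zeta_p]^\times$, I claim there are exactly two preimages, corresponding to the two choices forced by the congruence, and these differ by the element $1 - 2{\bf e}_{\bf 1}$, which maps to $(-1, 1)$ and is visibly a unit of order $2$ in $\calA_{L/\Q}$. Thus $\psi$ is surjective onto $\Z[\zeta_p]^\times$ with kernel $\{1, 1-2{\bf e}_{\bf 1}\}$, giving the exact sequence. Surjectivity of $\psi$ is where I would spend the most care: given $\beta \in \Z[\zeta_p]^\times$, I need to produce $x \in \calA_{L/\Q}$ with $\psi(x) = \beta$ and $x$ a unit; writing $x = {\bf e}_{\mathfrak f} y + a{\bf e}_{\bf 1}$ with $y \in \Z[G]$ a lift of $\beta$ under $\Z[G] \to \Z[\zeta_p]$ and choosing $a \in \{1, -1\}$ so that $a \equiv \beta \pmod{(1-\zeta_p)}$ in $\Z/p\Z$ — possible since $p$ is odd so $1 \not\equiv -1$ and at least... no, one needs $\beta \bmod p \in \{1, -1\}$, which is false in general.

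The resolution — and the actual main obstacle — is that one does not lift $\beta$ naively: instead, since $\calA_{L/\Q} = \Z[G][{\bf e}_{\bf 1}]$ properly contains $\Z[G]$, the element ${\bf e}_{\bf 1} = \frac1p\sum \sigma^i$ lets one modify any lift. Precisely, given $y \in \Z[G]$ with image $\beta$ in $\Z[\zeta_p]^\times$ and image $c := y|_{\sigma=1} \in \Z$ in the trivial component, the element $x := y + \frac{a - c}{p}\bigl(\sum_{i}\sigma^i\bigr) \cdot \text{(appropriate form)}$... rather, $x := {\bf e}_{\mathfrak f} y + a {\bf e}_{\bf 1} = y - c{\bf e}_{\bf 1} + a{\bf e}_{\bf 1} = y + (a-c){\bf e}_{\bf 1}$ lies in $\calA_{L/\Q}$ for any $a \in \Z$, has $\psi(x) = \beta$, and trivial component $a$; choosing $a = \pm 1$ makes $x$ a unit (its image in each factor of $\Z \times \Z[\zeta_p]$ is a unit, and $\calA_{L/\Q}$ being the fibre product, a local-at-each-factor unit that lies in $\calA_{L/\Q}$ is a unit). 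Both $a = 1$ and $a = -1$ work, giving the two-element kernel. I would state the fibre-product structure of $\calA_{L/\Q}$ as the first lemma-step, verify $1 - 2{\bf e}_{\bf 1} \in \calA_{L/\Q}^\times$ directly, and then the exact sequence drops out; Lemma~\ref{lem:AF} is invoked implicitly to know $U_p \subseteq \Z[\zeta_p]^\times$ is the part hit by $\Z[G]^\times$, but the enlargement to the full associated order is exactly what lets $\psi$ become surjective, which is the conceptual heart of the statement.
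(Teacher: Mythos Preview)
For part~(1) your argument coincides with the paper's. For part~(2) your route is different from and conceptually cleaner than the paper's, but there is one misstatement to correct. You assert that the image of $\calA_{L/\Q}$ in $\Z\times\Z[\zeta_p]$ is the fibre product $\{(a,\beta):a\equiv\beta\pmod{1-\zeta_p}\}$; this is the description of $\Z[G]$, not of $\Z[G][{\bf e}_{\bf 1}]$. Since ${\bf e}_{\bf 1}\mapsto(1,0)$ with $1\not\equiv 0\pmod{1-\zeta_p}$, adjoining ${\bf e}_{\bf 1}$ destroys the congruence and makes the map onto all of $\Z\times\Z[\zeta_p]$---indeed your own construction $x=y+(a-c){\bf e}_{\bf 1}$ with \emph{arbitrary} $a\in\Z$ already exhibits this. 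The point matters: under the fibre-product description only $a\equiv 1\pmod p$ would be admissible in the kernel, forcing $\ker\psi=\{1\}$ for odd~$p$, contrary to the statement. Once you replace ``fibre product'' by ``full product'' (equivalently, observe that the orthogonal idempotents ${\bf e}_{\bf 1},\,{\bf e}_{\mathfrak f}=1-{\bf e}_{\bf 1}\in\calA_{L/\Q}$ give a ring splitting $\calA_{L/\Q}\cong\Z\times\Z[\zeta_p]$), everything follows at once: $\calA_{L/\Q}^\times\cong\{\pm1\}\times\Z[\zeta_p]^\times$, the map $\psi$ is the second projection, and $(-1,1)$ corresponds to $1-2{\bf e}_{\bf 1}$. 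Lemma~\ref{lem:AF} is then not needed for part~(2) at all.

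The paper argues by hand instead. For surjectivity it lifts $\alpha\in\Z[\zeta_p]^\times$ to $f(\sigma)\in\Z[G]$, invokes Lemma~\ref{lem:AF} to pin down $f\bmod(\phi_p,X-1)$, and builds $y=f(\sigma)-f_1(\sigma)p{\bf e}_{\bf 1}-(a+1){\bf e}_{\bf 1}$ together with an explicit inverse $z$, verifying $yz=1$ via simultaneous divisibility by $X-1$ and $\phi_p(X)$ in $\Q[X]$. The kernel is computed by first showing $\ker\psi=\{1+a{\bf e}_{\bf 1}:a\in\Z\}\cap\calA_{L/\Q}^\times$ and then solving $ab+a+b=0$ over~$\Z$. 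Your idempotent splitting bypasses all of this polynomial bookkeeping; the paper's argument, on the other hand, stays at the level of explicit elements of the group ring and never names the product decomposition.
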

\begin{proof}
The assertion of (1) follows from (\ref{eq:A}) and Lemma~\ref{lem:AF}. We prove (2). First, we will prove that the map $\psi$ is surjective.
Let $\alpha \in \Z [\zeta_p]^{\times}$. By Lemma~\ref{lem:AF}, there exists $u\in U_p$ and $k\in \{1,2,\ldots, (p-1)/2 \}$ such that $\alpha =u_ku$.
Let $\beta \in \Z [\zeta_p]^{\times}$ satisfy $\alpha \beta =1$ and write $\beta =u_{\ell} v,\, v\in U_p$ and $\ell \in \{1,2,\ldots, (p-1)/2 \}$.
We can write $\alpha=f(\zeta_p)$ and $\beta=g(\zeta_p)$ for $f,g\in \Z[X]$. Since $f(\zeta_p )g(\zeta_p)=\alpha \beta =1$, we have
\begin{equation}\label{eq:fg=1}
fg \equiv 1 \pmod{(\phi_p) } \quad {\rm in}\ \Z[X].
\end{equation}
On the other hand, since $\alpha=u_ku \equiv \pm u_k \equiv \pm k \pmod{ (1-\zeta_p)}$ and 
$\beta=u_{\ell} v \equiv \pm u_{\ell} \equiv \pm \ell \pmod{(1-\zeta_p) }$, we obtain $f\equiv a \pmod{ (\phi_p,X-1)},\ a:=\pm k$ and
$g\equiv b \pmod{(\phi_p, X-1)},\ b:=\pm \ell$. Let $f_1, g_1 \in \Z[X]$ satisfy 
\begin{align}\label{eq:f,g}
& f\equiv  a+f_1 \phi_p \qquad \pmod{(X-1)}, \\
& g\equiv  \, b+g_1 \phi_p \qquad \pmod{(X-1)}. \notag
\end{align}
From (\ref{eq:fg=1}) and (\ref{eq:f,g}), we have $1\equiv fg\equiv ab \pmod{ (\phi_p,X-1)}$, and hence
$ab\equiv 1\pmod{p}$. Let $c\in \Z$ satisfy $ab=1+pc$. Define $f_2,g_2 \in \Q[X]$ by
\begin{align*}
& f_2=f-f_1\phi_p -\frac{a+1}{p} \, \phi_p,\\
& g_2=g-g_1 \phi_p-\frac{b+1}{p}\, \phi_p.
\end{align*}
By (\ref{eq:fg=1}), we have
\[
f_2g_2 \equiv fg \equiv 1 \pmod{ (\phi_p)} \quad {\rm in} \ \Q[X],
\]
and hence 
\begin{equation}\label{eq:Phi_p}
\phi_p \, | \, (f_2g_2-1) \qquad {\rm in} \ \Q[X].
\end{equation}
On the other hand, we have by (\ref{eq:f,g})
\[
f_2g_2 \equiv 1 \pmod{(X-1)} \qquad {\rm in} \ \Q[X],
\]
and hence
\begin{equation}\label{eq:X-1}
(X-1) \, |\, (f_2g_2-1) \quad {\rm in} \ \Q[X].
\end{equation}
From (\ref{eq:Phi_p}) and (\ref{eq:X-1}), we conclude that $X^p-1 =(X-1)\phi_p$ divides $f_2g_2-1$ in $\Q[X]$, and
$\overline{f}_2 \overline{g}_2 =1$ in $\Q[X]/(X^p-1)$. Define $y,z \in \calA_{L/\Q}$ by
\begin{align*}
& y=f(\sigma)-f_1(\sigma)p {\bf e}_{\bf 1} -(a+1){\bf e}_{\bf 1},\\
& z=g(\sigma)-g_1(\sigma)p{\bf e}_{\bf 1}-(b+1){\bf e}_{\bf 1}.
\end{align*}
The image of $yz$ by the map $\calA_{L/\Q} \hookrightarrow \Q[G] \simeq \Q[X]/(X^p-1),\ \sigma \mapsto \overline{X}$ is $\overline{f}_2\overline{g}_2=1$,
we obtain $yz=1$, and hence $y,z\in \calA_{L/\Q}^{\times}$.
Since $\psi ({\bf e}_{\bf 1})=0$, we have $\alpha=f(\zeta_p)=\psi (y),\ y\in \calA_{L/\Q}^{\times}$, and
hence $\alpha \in \psi (\calA_{L/\Q}^{\times})$. We obtain that $\psi$ is surjective.

Next, we  show that ${\rm Ker}\, \psi =\{ 1,1-2{\bf e}_{\bf 1} \}$. From (\ref{eq:A}) and ${\bf e}_{\mathfrak f}+{\bf e}_{\mathfrak f/p}=1,\ {\bf e}_{\mathfrak f/p}={\bf e}_{\bf 1}$,
we have
\[
\calA_{L/\Q}=\Z[G][{\bf e}_{\mathfrak f}, {\bf e}_{\mathfrak f/p}]=\Z [G][{\bf e}_{\bf 1}].
\]
Furthermore, since for any $x\in \Z[G]$ there exists $a\in \Z$ such that $x{\bf e}_{\bf 1}=a{\bf e}_{\bf 1}$ and ${\bf e}_{\bf 1}^2={\bf e}_{\bf 1}$, 
we obtain $\calA_{L/\Q}=\{x+a{\bf e}_{\bf 1}\, |\, x\in \Z[G],\ a\in \Z \}$. First, we show ${\rm Ker}\, \psi =\{1+a {\bf e}_{\bf 1} \, |\, a\in \Z \} \cap \calA_{L/\Q}^{\times}$.
Let $\widetilde{\psi} : \calA_{L/\Q}^{\times} \overset{\psi}{\longrightarrow} \Z[\zeta_p]^{\times} \overset{\sim}{\longrightarrow} (\Z[X]/(\phi_p) )^{\times}$.
It is enough to show ${\rm Ker}\, \widetilde{\psi} =\{1+a {\bf e}_{\bf 1} \, |\, a\in \Z \}\cap \calA_{L/\Q}^{\times}$.
We will prove ${\rm Ker}\, \widetilde{\psi} \subseteq \{1+a {\bf e}_{\bf 1} \, |\,  a\in \Z \} \cap \calA_{L/\Q}^{\times}$ since
 the
opposite inclusion is trivial. Let $\alpha=x+a{\bf e}_{\bf 1} \in {\rm Ker}\, \widetilde{\psi},\, x\in \Z[G],\, a\in \Z$.
we can write $x=f(\sigma)$ for $f(X)\in \Z[X]$. Since
\[
1=\widetilde{\psi} (\alpha)=\widetilde{\psi} (f(\sigma)+a{\bf e}_{\bf 1})=\overline{f(X)},
\]
we have $f(X)\equiv 1\pmod{(\phi_p)}$ in $\Z[X]$. Let $g\in \Z[X]$ satisfy $f=1+g\Phi_p$, and $c\in \Z$ satisfy $g(\sigma)
{\bf e}_{\bf 1}=c{\bf e}_{\bf 1}$. Then we have
\[
\alpha=x+a{\bf e}_{\bf 1} =f(\sigma )+a{\bf e}_{\bf 1} =1+g(\sigma )p{\bf e}_{\bf 1}+a {\bf e}_{\bf 1} =1+(cp+a) {\bf e}_{\bf 1},
\]
and hence $\alpha \in  \{1+a{\bf e}_{\bf 1} \, |\, a\in \Z \} \cap \calA_{L/\Q}^{\times}$. We obtain 
${\rm Ker}\, \psi =\{1+a{\bf e}_{\bf 1} \, | \,a \in \Z \} \cap \calA_{L/\Q}^{\times}$, and
to show ${\rm Ker}\, \psi =\{ 1,1-2{\bf e}_{\bf 1} \}$, we show $\{ 1+a {\bf e}_{\bf 1} \, |\, a\in \Z \} \cap \calA_{L/\Q}^{\times}
=\{1,1-2{\bf e}_{\bf 1} \}$. We have $1-2 {\bf e}_{\bf 1} \in \{1+a{\bf e}_{\bf 1} \, | \, a\in \Z \} \cap \calA_{L/\Q}^{\times}$ since
$(1-2{\bf e}_{\bf 1})^2=1$. Conversely, let $1+a{\bf e}_{\bf 1} \in \calA_{L/\Q}^{\times},\ a\in \Z$.
We will show that $a\in \{0,-2\}$.  Since $\{1+a{\bf e}_{\bf 1} \, |\, a\in \Z \}\cap \calA_{L/\Q}^{\times}={\rm Ker}\, \psi$ is
a subgroup of $\calA_{L/\Q}^{\times}$, there exists $1+b{\bf e}_{\bf 1} \in \calA_{L/\Q}^{\times},\ b\in \Z$ satisfying
\[
1=(1+a{\bf e}_{\bf 1})(1+b{\bf e}_{\bf 1})=1+ (ab+a+b) {\bf e}_{\bf 1}.
\]
From this equality, we obtain $ab+a+b=0$. The pair $(a,b)=(0,0)$ satisfies $ab+a+b=0$. We assume that $(a,b) \ne (0,0)$. Since
$a(1+b)=-b$, we have $b|a$. Let $t\in \Z$ satisfy $a=bt$. Then we have $t(1+b)=-1$. Since $t,b \in \Z$ and $b\ne 0$, we
conclude that $b=-2$ and $t=1$, and hence $(a,b)=(-2,-2)$. We obtain ${\rm Ker}\, \psi=\{1,\, 1-2{\bf e}_{\bf 1} \}$.
\end{proof}
\begin{cor}\label{cor:p=3}
If $p=3$, then we have
\[
\calA_{L/\Q}^{\times} =\begin{cases}
\langle -1 \rangle \times \langle \sigma \rangle =\{\pm1, \pm \sigma, \pm \sigma^2 \} & {\rm if}\ 3\nmid \mathfrak f, \\
\langle1-2{\bf e}_{\bf 1} \rangle \times \langle -1 \rangle \times \langle \sigma \rangle = \{ \pm 1, \pm \sigma, \pm \sigma^2 , \pm (1-2{\bf e}_{\bf 1}), \pm \sigma
(1-2{\bf e}_{\bf 1}), \pm \sigma^2 (1-2{\bf e}_{\bf 1}) \} &  {\rm if}\ 3 | \mathfrak f.
\end{cases}
\]
\end{cor}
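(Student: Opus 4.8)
The plan is to specialize Theorem~\ref{theo:A} to $p=3$ and to make the groups appearing there completely explicit. The key auxiliary fact is that $\Z[\zeta_3]$ is the ring of Eisenstein integers, so $\Z[\zeta_3]^{\times}$ is the group of sixth roots of unity $\{\pm1,\pm\zeta_3,\pm\zeta_3^2\}=\langle-\zeta_3\rangle$, of order $6$; moreover $U_3=\Z[\zeta_3]^{\times}$, either because $(p-1)/2=1$ and $u_1=1$, so the quotient $\Z[\zeta_3]^{\times}/U_3$ of Lemma~\ref{lem:AF} is trivial, or directly because $\zeta_3\equiv1\pmod{(1-\zeta_3)}$ forces every sixth root of unity to be $\equiv\pm1\pmod{(1-\zeta_3)}$.

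In the tame case $3\nmid\mathfrak f$, Theorem~\ref{theo:A}(1) gives an isomorphism $\calA_{L/\Q}^{\times}\xrightarrow{\sim}U_3=\Z[\zeta_3]^{\times}$ sending $\sigma\mapsto\zeta_3$; pulling back the six roots of unity yields $\calA_{L/\Q}^{\times}=\{\pm1,\pm\sigma,\pm\sigma^2\}$. Since $\langle\sigma\rangle=\{1,\sigma,\sigma^2\}$ has order $3$ and does not contain $-1$, and $\calA_{L/\Q}^{\times}$ is abelian, this group is the internal direct product $\langle-1\rangle\times\langle\sigma\rangle$, which is the first assertion.

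In the wild case $3\mid\mathfrak f$, I would start from the exact sequence $1\to\{1,\,1-2{\bf e}_{\bf 1}\}\to\calA_{L/\Q}^{\times}\xrightarrow{\psi}\Z[\zeta_3]^{\times}\to1$ of Theorem~\ref{theo:A}(2); together with $|\Z[\zeta_3]^{\times}|=6$ this gives $|\calA_{L/\Q}^{\times}|=12$. The element $1-2{\bf e}_{\bf 1}$ has order $2$ (its square is $1$ by ${\bf e}_{\bf 1}^2={\bf e}_{\bf 1}$, and it is not $1$ in $\Q[G]$) and lies in $\Ker\psi$ since $\psi({\bf e}_{\bf 1})=0$. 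On the other hand $\psi(-1)=-1$ and $\psi(\sigma)=\zeta_3$, so $\psi$ maps the order-$6$ subgroup $\langle-1,\sigma\rangle=\{\pm1,\pm\sigma,\pm\sigma^2\}$ onto $\langle-1,\zeta_3\rangle=\Z[\zeta_3]^{\times}$; comparing orders, $\psi|_{\langle-1,\sigma\rangle}$ is an isomorphism and in particular $\langle-1,\sigma\rangle\cap\Ker\psi=\{1\}$. Since $\langle1-2{\bf e}_{\bf 1}\rangle\subseteq\Ker\psi$, the subgroups $\langle1-2{\bf e}_{\bf 1}\rangle$ and $\langle-1,\sigma\rangle$ meet trivially, so (everything being abelian) $\calA_{L/\Q}^{\times}$ is the internal direct product $\langle1-2{\bf e}_{\bf 1}\rangle\times\langle-1\rangle\times\langle\sigma\rangle$, a group of order $2\cdot2\cdot3=12$; its twelve elements are exactly the $\pm\sigma^i$ and $\pm\sigma^i(1-2{\bf e}_{\bf 1})$ for $i=0,1,2$, as stated. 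I do not expect a genuine obstacle here; the only points requiring a little care are the identification $U_3=\Z[\zeta_3]^{\times}$ and the bookkeeping that promotes the exact sequence to the internal direct-product decomposition.
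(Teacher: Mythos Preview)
Your proof is correct and follows exactly the paper's approach: the paper's proof is the single sentence ``The assertion follows from Theorem~\ref{theo:A} and $\Z[\zeta_3]^{\times}=U_3=\{\pm1,\pm\zeta_3,\pm\zeta_3^2\}$,'' and you have simply unpacked this, identifying $U_3$ with the full unit group of the Eisenstein integers and, in the wild case, carrying out the bookkeeping that splits the exact sequence into the internal direct product.
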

\begin{proof}
The assertion follows from Theorem~\ref{theo:A} and $\Z[\zeta_3]^{\times}=U_3=\{\pm1, \pm\zeta_3, \pm \zeta_3^2\}$.
\end{proof}
\section{Proof of the theorem}\label{sec:proof}

In this section, we give the proof of Theorem~\ref{theo:main}. 
First, we have $n_1, n_2 \in \Z $ since $M\equiv N \pmod{2}$ from $4\mathfrak f =M^2+27N^2$.
We  show that $n_1$ and $n_2$ are coprime. Let $p$ be a prime number which divides both $n_1$ and $n_2$. Since $4\mathfrak f=M^2+27N^2$ and $2\nmid \mathfrak f$,
either  $n_1$ or $n_2$ is not divisible by $2$ (note that $M\equiv N \equiv 1\pmod{2}$ or $M \not\equiv N \pmod{4}$ holds), and  hence $p\ne 2$. Furthermore, 
we have $p\ne 3$ since $n_2=N \not\equiv 0 \pmod{3}$. Since $p$ divides both $n_1=(M-3N)/2$ and $n_2=N$, it follows that $p$ divides $M$. This is a contradiction 
since $4\mathfrak f=M^2+27N^2$, $\mathfrak f/9$ is square-free and $p\ne 2,3$. Therefore, $n_1$ and $n_2$ are coprime.
Since $4\mathfrak f=M^2+27N^2=4(n_1^2+3n_1n_2+9n_2^2)$, we have $\mathfrak f=n_1^2+3n_2n_2+9n_2^2$, and hence
$3|n_1$. It follows that  $f_n (X)$ for $n=n_1/n_2$ is irreducible over $\Q$ by Lemma~\ref{lem:irre}.
Let $t=n_1/3 \in \Z$. From $n_1=(M-3N)/2$ and $M=3M_0$, we have $2t+n_2=M_0\equiv 2 \pmod{3}$. and hence $t\not\equiv n_2 \pmod{3}$. It follows that 
the conductor of $L_n$ is $\mathfrak f$  (\cite[Corollary~1]{A1} and $D_{L_n}=\mathfrak f^2$ where $D_{L_n}$ is the discriminant of $L_n$).
We have  already proved (1), (2), (3) of the theorem.

Next, we prove the remaining (4) and (5).  From Lemma~\ref{lem:Aoki},
$\alpha+1$
is a generator of $\calO_{L_n}$ over $\calA_{L_n/\Q}$ for $\alpha :=n_2\rho_n-n_1/3 \in {\bf e}_{\mathfrak f} \calO_{L_n}$.
Let $\alpha'=\sigma (\alpha)$ and $\alpha''=\sigma^2(\alpha)$. 
Since 
\[
\calA_{L_n/\Q}^{\times}= \{ \pm 1, \pm \sigma, \pm \sigma^2 , \pm (1-2{\bf e}_{\bf 1}), \pm \sigma
(1-2{\bf e}_{\bf 1}), \pm \sigma^2 (1-2{\bf e}_{\bf 1}) \}
\]
from Corollary~\ref{cor:p=3}, there are 12 generators of $\calO_{L_n}$ over $\calA_{L_n/\Q}$ which are given by $u (\alpha +1)$ for $u\in \calA^{\times}_{L_n/\Q}$.
Since ${\bf e}_{\mathfrak f}\, {\bf e}_{\bf 1}=0$ and $\alpha \in {\bf e}_{\mathfrak f}\, \calO_{L_n}$, we have $(1-2{\bf e}_{\bf 1})(\alpha+1)=\alpha-1$.
Therefore, the 12 generators of $\calO_{L_n}$ are
\begin{equation}\label{eq:gens}
\pm \alpha \pm 1, \quad \pm \alpha' \pm 1, \quad \pm \alpha'' \pm1  \quad \text{(any double sign)}.
\end{equation}

On the other hand, from (\ref{eq:O_L}), we know that $\eta_0+1$ is a generator where $\eta_0={\rm Tr}_{\Q (\zeta_{\mathfrak f})/L_n)} (\zeta_{\mathfrak f})$, 
and hence $\eta_0+1$ is equal to one of the 12 generators of (\ref{eq:gens}). Since ${\bf e}_{\mathfrak f}\, \eta_0=\eta_0,\ {\bf e}_{\mathfrak f}\,1=0$,
$\{ \eta_0, \eta_1, \eta_2 \}$ must be $\{ \alpha, \alpha', \alpha'' \}$ or $\{ -\alpha, -\alpha', -\alpha'' \}$.
Let
$h(X) =(X-\alpha)(X-\alpha')(X-\alpha'')$
be the minimal polynomial of $\alpha$.
Since $\rho_n, \rho_n'$ and $\rho_n''$ are roots of $f_n(X)$
and $\alpha=n_2\rho_n-n_1/3$,
we have
\begin{equation}\label{eq:fn-h}
n_2^3 f_n(X)=h\left( n_2X-\frac{n_1}{3} \right).
\end{equation}
Let  $P(X)=(X-\eta_0)(X-\eta_1)(X-\eta_2)$ be the period polynomial.
Since $\{ \eta_0,\eta_1,\eta_2 \} =\{ \alpha, \alpha',\alpha'' \}$ or $\{-\alpha, -\alpha', -\alpha'' \}$
and the coefficients of $x^2$ are zero, 
the difference of two polynomials $h(X)$ and $P(X)$ is only the sign of the constant term. To determine the sign, we calculate the values of
$\eta_0 \eta_1 \eta_2$ and $\alpha \alpha' \alpha''$ modulo $3$.
Let  $\mathscr{X}=\langle \chi \rangle$ be the group of Dirichlet characters associated to $L_n$
where $\chi=\chi_{3^2} \chi_{p_1}\cdots \chi_{p_{\nu}}$ and $\chi_m$ is the Dirichlet character of conductor $m$.
We define {\it the Gaussian sum} $\tau (\chi)$ for the character $\chi$ of conductor $\mathfrak f$ is
\[
\tau (\chi) =\sum_{a\in (\Z/\mathfrak f\Z)^{\times} }\chi(a) \zeta_{\mathfrak f}^a.
\]
We have $\tau (\chi)=\eta_0+\zeta_3 \eta_1+\zeta_3^2 \eta_2$ or $\eta_0 +\zeta_3^2 \eta_1+\zeta_3 \eta_2$  for the primitive third root of unity $\zeta_3=e^{2\pi i/3}$.
Since $\eta_0+\eta_1+\eta_2={\rm Tr}_{\Q (\zeta_{\mathfrak f})/L_n} (\zeta_{\mathfrak f}) =\mu (\mathfrak f)=0$, 
we have $\tau (\chi) +\overline{\tau (\chi)} =3\eta_0$, and hence we obtain
\begin{equation*}
(3\eta_0)^3=\tau(\chi)^3+\overline{\tau (\chi)}^3+9\eta_0 \tau(\chi)\, \overline{\tau (\chi)}.
\end{equation*}
Furthermore, since 
$\tau (\chi)\, \overline{\tau (\chi)}=\mathfrak f=3^2 p_1\cdots p_{\nu}$, it conclude that
\begin{equation}\label{eq:sum-Gauss}
(3\eta_0)^3=\tau(\chi)^3+\overline{\tau (\chi)}^3+3^4 p_1\cdots p_{\nu} \eta_0.
\end{equation}
Since $\chi=\chi_{3^2}\chi_{p_1}\cdots \chi_{p_{\nu}}$, we have 
\begin{equation}\label{eq:GS3}
\tau (\chi)^3=\tau (\chi_{3^2})^3 \tau (\chi_{p_1})^3\cdots \tau (\chi_{p_{\nu}})^3.
\end{equation}
By direct calculation, we have
\begin{equation}\label{eq:GS-cond9}
\tau(\chi_{3^2}) =\begin{cases}
27 \zeta_3  &\text{if $\chi_{3^2}(a)=\zeta_3^{(a^2-1)/3}$,} \\
27 \zeta_3^{-1}  &\text{if $\chi_{3^2}(a)=\zeta_3^{-(a^2-1)/3}$,} 
\end{cases} 
\end{equation}
\begin{equation}\label{eq:GS-pi}
\tau (\chi_{p_i})=\sum_{a\in (\Z/p_i\Z)^{\times}} \chi_{p_i}(a) \zeta_{p_i}^a \equiv -1 \pmod{(1-\zeta_3)} \qquad \text{in}\ \Z [\zeta_{\mathfrak f}]
\end{equation}
for $i\in \{1,\ldots ,\nu \}$. From (\ref{eq:sum-Gauss}), (\ref{eq:GS3}), (\ref{eq:GS-cond9}) and (\ref{eq:GS-pi}), we obtain
\begin{align*}
(3\eta_0)^3 & \equiv 2\times (-1)^{\nu} \times 27+3^4p_1\cdots p_{\nu} \eta_0 \\
& \equiv 2\times (-1)^{\nu} \times 27 \pmod{27(1-\zeta_3)} \qquad \text{in}\ \Z [\zeta_{\mathfrak f}].
\end{align*}
We conclude that $\eta_0^3 \equiv 2\times (-1)^{\nu} \equiv (-1)^{\nu+1} \pmod{(1-\zeta_3)}$ and hence
$(\eta_0 \eta_1 \eta_2)^3\equiv (-1)^{\nu+1} \pmod{(1-\zeta_3)}$.
Since $\eta_0\eta_1\eta_2 \in \Z$, we have $(\eta_0 \eta_1\eta_3)^3 \equiv (-1)^{\nu+1} \pmod{3}$ and hence 
\begin{equation}\label{eq:3etas}
\eta_0\eta_1\eta_2 \equiv (-1)^{\nu+1} \pmod{3}.
\end{equation}
On the other hand, we obtain 
\begin{align}
\alpha \alpha' \alpha'' & =\left(n_2\rho_n-\frac{n_1}{3} \right)\left(n_2\rho_n'-\frac{n_1}{3}\right)\left(n_2\rho_n'' -\frac{n_1}{3}\right) \label{eq:3alphas} \\
& =\frac{1}{27} (n_1^2+3n_1n_2+9n_2^2) (2n_1+3n_2)  \notag \\
& =\frac{1}{27} \mathfrak fM =p_1\cdots p_{\nu} M_0 \equiv -1 \pmod{3}. \notag
\end{align}
From (\ref{eq:3etas}) and (\ref{eq:3alphas}), we have
\[
\{ \eta_0, \eta_1, \eta_2 \} =\{ (-1)^{\nu} \alpha, (-1)^{\nu} \alpha', (-1)^{\nu} \alpha'' \} =\{\mu (\mathfrak f/9)\alpha,
\mu (\mathfrak f/9)\alpha', \mu (\mathfrak f/9)\alpha'' \}.
\]
Therefore, we have
\begin{equation}\label{eq:h-P}
h(X)=\mu (\mathfrak f/9) P(\mu (\mathfrak f/9) X).
\end{equation}
From (\ref{eq:fn-h}) and (\ref{eq:h-P}), we obtain (4) of the theorem, and (5) follows from (4).

Finally, let $(M',N')$ be another pair satisfying (\ref{eq:4f}) and put $n_1'=(M'-3N')/2$ and $n_2'=N'$.
Since $2n_1+3n_2=M \ne M' =2n_1'+3n_2'$, we have $L_n \ne L_{n'}$ by Lemma~\ref{lem:not-L}.
Since there are exactly $2^{\nu -1}$ pairs $(M,N)$ which satisfy (\ref{eq:4f}) (\cite[p.\. 342, 343]{Co}) and there are exactly $2^{\nu-1}$ cubic fields with conductor $\mathfrak f$,
any cyclic cubic field with conductor $\mathfrak f$ must coincide with $L_n$ for $n=n_1/n_2$ where $n_1$ and $n_2$ are defined by such a pair $(M,N)$.
%
%
\section{Examples}\label{sec:ex}
We consider cyclic cubic fields with conductor $\mathfrak f=9\times 7\times 13$.
All the pairs $(M,N)$ satisfying (\ref{eq:4f}) are $(M,N)=(-3\cdot 19,1), (3\cdot 17, 5), (3\cdot 8, 10), (-3,11)$, and the corresponding pair $(n_1,n_2)$ are
$(-30,1), (18,5), (-3,10), (-18,11)$ in order.
Table~\ref{table} shows the Gaussian periods of $L_n$,  Shanks' cubic polynomials, and the period polynomials  for each $n=n_1/n_2$.
Put $\rho=\rho_n$.
 \begin{table}[h]
\caption{$\mathfrak f=9\times 7\times  13$}
{\small 
 \begin{center}
 \begin{tabular}{ |  >{\centering} p{5em}|>{\centering} p{15em}|>{\centering} p{10em}|c|}
 \hline
\rule{0pt}{5mm}   $(n_1,n_2)$  & $\{ \eta_0, \eta_1,\eta_2 \}$  & $f_n(X) $ &  \hspace{5mm} $P(X)$  \hspace{5mm} \\ 
\hline  \hline 
\rule{0pt}{5mm}   $(-30,1)$ & $\{ \rho+10,\,  \rho'+10,\, \rho''+10\}$ & $X^3+30X^2+27X-1$  &  $X^3-273X+1729$ \\ 
\hline
\rule{0pt}{5mm}   $(18,5)$ & $\{ 5\rho-6,\, 5\rho'-6,\, 5\rho''-6\}$ & $X^3-\frac{18}{5} X^2-\frac{33}{5} X-1$  &  $X^3-273 X-1547$ \\ 
\hline
\rule{0pt}{5mm}   $(-3,10)$ & $\{ 10\rho+1,\, 10\rho'+1,\, 10\rho''+1\}$ & $X^3+\frac{3}{10} X^2-\frac{27}{10} X-1$  &  $X^3-273 X-728$ \\ 
\hline
\rule{0pt}{5mm}   $(-18,11)$ & $\{ 11\rho+6,\, 11\rho'+6,\, 11\rho''+6\}$ & $X^3+\frac{18}{11} X^2 -\frac{15}{11} X-1$  &  $X^3-273X+91$ \\ 
\hline
\end{tabular}
\label{table}
\end{center}
}
\end{table}

\end{document}